\documentclass[journal]{IEEEtran}
\usepackage[T1]{fontenc}
\usepackage[latin9]{inputenc}
\usepackage{prettyref}

\usepackage{amsmath, amssymb}
\usepackage{amsthm}
\usepackage{graphicx}
\PassOptionsToPackage{normalem}{ulem}
\usepackage{ulem}
\usepackage{cancel}
\usepackage{tabularx}
\usepackage[unicode=true,
bookmarks=true,bookmarksnumbered=true,bookmarksopen=true,bookmarksopenlevel=1,
 breaklinks=false,pdfborder={0 0 0},pdfborderstyle={},backref=false,colorlinks=false]
 {hyperref}
\hypersetup{pdftitle={Your Title},
 pdfauthor={Your Name},
 pdfpagelayout=OneColumn, pdfnewwindow=true, pdfstartview=XYZ, plainpages=false}

\makeatletter

\usepackage[caption=false,font=footnotesize]{subfig}

\usepackage{cite}
\usepackage{eurosym}

\usepackage{hyperref}

\usepackage{soul}
\soulregister\cite7
\soulregister\ref7
\soulregister\eqref7
\usepackage[dvipsnames]{xcolor}
\DeclareRobustCommand{\hlyellow}[1]{{\sethlcolor{white}\hl{#1}}} 
\DeclareRobustCommand{\hlorange}[1]{{\sethlcolor{white}\hl{#1}}} 
\DeclareRobustCommand{\hlgreen}[1]{{\sethlcolor{white}\hl{#1}}} 
\DeclareRobustCommand{\hlblue}[1]{{\sethlcolor{white}\hl{#1}}} 
\DeclareRobustCommand{\hlpink}[1]{{\sethlcolor{white}\hl{#1}}} 
\DeclareRobustCommand{\hlteal}[1]{{\sethlcolor{white}\hl{#1}}} 
\DeclareRobustCommand{\hlinv}[1]{{\sethlcolor{white}\hl{#1}}} 

\allowdisplaybreaks

\newtheorem{theorem}{Theorem}[section]
\newtheorem{lemma}[theorem]{Lemma}
\newtheorem{proposition}[theorem]{Proposition}

\def\fnum@figure{Fig.~\thefigure}

\usepackage{multirow,colortbl}
\usepackage{psfrag}

\usepackage{soul}
\soulregister\cite7
\soulregister\ref7
\soulregister\eqref7

\usepackage[colorinlistoftodos]{todonotes}

\newrefformat{fig}{Fig.~\ref{#1}}
\newrefformat{sec}{Section~\ref{#1}}
\newrefformat{subsec}{Section~\ref{#1}}
\newrefformat{app}{Appendix}
\newrefformat{tab}{Table~\ref{#1}}

\newcommand{\RU}{\overline{R}^{\mathrm{up}}}
\newcommand{\RD}{\overline{R}^{\mathrm{dn}}}
\newcommand{\SUC}{C^{\mathrm{su}}}
\newcommand{\SDC}{C^{\mathrm{sd}}}
\newcommand{\Cp}{C^{\mathrm{p}}}
\newcommand{\Cu}{C^{\mathrm{nl}}}

\newcommand{\Pup}{\overline{P}}
\newcommand{\Plow}{\underline{P}}
\newcommand{\csu}{c^{\mathrm{su}}}
\newcommand{\csd}{c^{\mathrm{sd}}}

\newcommand{\mut}{\underline{T}^\mathrm{up}}
\newcommand{\mdt}{\underline{T}^\mathrm{dn}}
\newcommand{\SU}{\overline{P}^\mathrm{su}}
\newcommand{\SD}{\overline{P}^\mathrm{sd}}
\newcommand{\Psu}{P^\mathrm{su}}
\newcommand{\Psd}{P^\mathrm{sd}}
\newcommand{\sut}{T^\mathrm{su}}
\newcommand{\sdt}{T^\mathrm{sd}}
\newcommand{\tstart}{t_0}
\newcommand{\tend}{t_{end}}
\newcommand{\Tzero}{\mathcal{T}^0}
\newcommand{\Tmut}{\mathcal{T}^\mathrm{up}}
\newcommand{\Tmdt}{\mathcal{T}^\mathrm{dn}}

\newcommand{\uinv}{u^\mathrm{inv}}

\makeatother

\begin{document}
\title{Tight Formulations for Unit Commitment with Different Levels of Details -- Part I: Model\hlpink{s }and Theoretical Insights}
\author{$\negthickspace\negthickspace\negthickspace\negthickspace\negthickspace\negthickspace\negthickspace\negthickspace\negthickspace\negthickspace$Maaike B. Elgersma,\hlteal{ }Karen I. Aardal, Mathijs M. de Weerdt, and~\hlteal{Germ\'{a}n Morales-Espa\~{n}a}
$\negthickspace\negthickspace\negthickspace\negthickspace\negthickspace\negthickspace\negthickspace\negthickspace\negthickspace\negthickspace$}
\maketitle
\begin{abstract}
The unit commitment (UC) problem is \hlyellow{paramount for optimal} operation of power systems, but it faces computational limitations in large-scale \hlteal{settings, especially in} \hlpink{investment or stochastic models,} because of the binary variables that it contains. A lot of research has attempted to improve the computational performance of\hlpink{ }UC models, either by reducing model \hlpink{size, resulting in lower }fidelity and accuracy, or by improving the tightness of the formulation. Tightness and model size are the best a priori indicators of the computational performance of UC models, but there is no clear overview of what the best formulation is for different \hlpink{generators}. In this research, we define models with different levels of detail, and present a formulation \hlpink{for each level} that is based on the convex hull\hlyellow{.} We show new \hlpink{proofs} on the tightness of well-known formulations \hlblue{for ramping, for start-up and shut-down costs and capabilities, and for UC with investment.}
\hlteal{These models, with a different level of detail, can be incorporated into large-scale problems to reduce the computational burden, as demonstrated in Part II.} 
\end{abstract}

\begin{IEEEkeywords}
Unit commitment \hlteal{(UC)},\hlteal{ }mixed-integer linear programming (MILP), linear programming (LP), convex hull, tight formulation, optimal investments, ramping, minimum up and down times, start-up and shut-down costs/capabilities/trajectories.
\end{IEEEkeywords}

\section*{Nomenclature}
An overview of the notation used throughout this paper is given below.

\vspace{5pt}
\noindent \textit{Sets and indices:}
\begin{IEEEdescription}[\IEEEusemathlabelsep\IEEEsetlabelwidth{$\Tmut_g$}]
    \item[$g\in \mathcal{G}$] units (generators)
    \item[$\mathcal{G}^1$] units (generators) with a minimum up time of 1 ($\subseteq \mathcal{G}$)
    \item[$t\in \mathcal{T}$] time periods, where $\tstart$ is the first
    \item[$\Tzero$] $=\mathcal{T}\setminus\{\tstart\}$
    \item[$\Tmut_g$] $=\mathcal{T}\setminus\{\tstart,\dots,\tstart+\mut_g\}$
    \item[$\Tmut_g$] $=\mathcal{T}\setminus\{\tstart,\dots,\tstart+\mdt_g\}$
\end{IEEEdescription}
\noindent \textit{Parameters:}
\begin{IEEEdescription}[\IEEEusemathlabelsep\IEEEsetlabelwidth{$\Cp_g$}]
\item[$\Cu_g$] \hlyellow{cost of non-load of unit $g$}
\item[$\Cp_g$] cost of energy production of unit $g$
    \item[$\SUC_g$] cost of starting up unit $g$ once
    \item[$\SDC_g$] cost of shutting down unit $g$ once
    \item[$D_t$] demand in time period $t$
    \item[$\Psu_{gi}$] output of unit $g$ in time period $i$ of the start-up trajectory 
    \item[$\Psd_{gi}$] output of unit $g$ in time period $i$ of the shut-down trajectory 
    \item[$\Plow_g$] minimum output of unit $g$ in one time period
    \item[$\Pup_g$] maximum output of unit $g$ in one time period
    \item[$\SU_g$] maximum output of unit $g$ when it starts up
    \item[$\SD_g$] maximum output of unit $g$ when it shuts down
    \item[$\RU_g$] maximum amount that unit $g$ can ramp-up its output in one time period
    \item[$\RD_g$] maximum amount that unit $g$ can ramp-down its output in one time period
    \item[$\mut_g$] minimum up time of unit $g$
    \item[$\mdt_g$] minimum down time of unit $g$
    \item[$\sut_g$] duration of start-up trajectory of unit $g$
    \item[$\sdt_g$] duration of start-up trajectory of unit $g$
\end{IEEEdescription}
\noindent \textit{Variables:}
\begin{IEEEdescription}[\IEEEusemathlabelsep\IEEEsetlabelwidth{$\Cp_g$}]
    \item[$\csu_{gt}$] costs associated with starting up unit $g$ in time period $t$ 
    \item[$\csd_{gt}$] costs associated with shutting down unit $g$ in time period $t$
    \item[$p_{gt}$] amount of energy that unit $g$ produces in time period $t$
    \item[$p^\mathrm{traj}_{gt}$] amount of energy that unit $g$ produces in its start-up or shut-down trajectory in time period $t$
    \item[$u_{gt}$] indicates whether unit $g$ is turned on or off in time period $t$
    \item[$\uinv_{g}$] \hlblue{indicates the number of investments in unit $g$}
    \item[$v_{gt}$] indicates whether unit $g$ is starts up in time period $t$ or not
    \item[$w_{gt}$] indicates whether unit $g$ is shuts down in time period $t$ or not
\end{IEEEdescription}

\section{Introduction}

\IEEEPARstart{T}{he} \hlteal{unit commitment (UC) problem is one of the most important problems for power system management}~\cite{monteroReviewUnitCommitment2022,ostrowskiTightMixedInteger2012}. It is a traditional optimization problem that obtains the best operational schedule for a group of units, such as thermal generators, nuclear power plants, and renewable generators, but it can also be used for other \hlyellow{types of units}, such as electrolyzers~\cite{gongJointUnitCommitment2024} and block bids~\cite{jianComparisonBlockBidding2002}. It \hlyellow{is typically formulated as a Mixed Integer Linear Program (MILP) that} minimizes system-wide operational costs of power generators \hlyellow{for 24-48 hours}, while making sure that the demand is met \hlpink{and operational capabilities of the units (the UC constraints) are respected}, but it faces computational limitations in large-scale applications~\cite{monteroReviewUnitCommitment2022,ostrowskiTightMixedInteger2012}. \hlyellow{Moreover, the increasing share of variable renewable energy has strongly increased the level of uncertainty in the system. Ideally, a stochastic or robust version of the UC problem would be considered, but this is an even more challenging problem to \mbox{solve~\cite{vanackooijLargescaleUnitCommitment2018}}.} As a result \hlyellow{of these computational challenges}, UC constraints are typically not considered \hlpink{at all} in large-scale investment models~\cite{wuijtsEffectModellingChoices2023}. However, omitting short-term operational details \hlpink{entirely} can lead to substantial overestimation of system flexibility\hlteal{, resulting in unfeasible solutions in practice,} and underestimation of system costs~\cite{morales-espanaHiddenPowerSystem2017}. Although such conclusions often heavily depend on the case study, several studies agree that the increase of variable renewable energy system integration increases the importance of considering these operational details~\cite{ponceletImpactLevelTemporal2016,palmintierImpactUnitCommitment2011,huaRepresentingOperationalFlexibility2018a}. This highlights the need for \hlpink{different,} computationally efficient ways to model and solve UC problems\hlpink{, so they can be at least partially incorporated into investment models while staying within computational limits}. Therefore, we look into ways to include UC models with different levels of detail into large-scale models. \hlpink{For different contexts and problem scales, the UC model with the most suitable level of detail (likely the highest possible) can be selected and included.} For each of these levels, we only consider formulations with strong LP relaxations that do not substantially increase model size.
\hlyellow{ }

The most widespread approach to model the UC problem is by addressing it as a classical optimization problem, according to Montero et al.~\cite{monteroReviewUnitCommitment2022}. There are many different types of constraints that can be considered, such as capacity limits, ramping limits, and minimum up and down times, all of which can be modeled in different ways. One of the most \hlyellow{common} ways to model UC problems is MILP~\cite{monteroReviewUnitCommitment2022}. The number of binary variables per time periods varies between different formulations, related to the type of constraints that are modeled~\cite{ridhaComplexityProfilesLargeScale2020}. To solve the models, several decomposition methods and heuristics, such as evolutionary algorithms, can be used, but commercial solvers are typically the most advanced and \hlyellow{commonly used} tools~\cite{monteroReviewUnitCommitment2022}. However, because of the large number of binary variables, even the best solvers are often unable to solve large-scale investment problems with UC in the desired time frame.\hlpink{ }The linear relaxation is \hlpink{then} solved instead~\cite{ridhaComplexityProfilesLargeScale2020}\hlorange{, which is obtained by relaxing the integrality constraints}, but this could introduce large errors.

A lot of research has attempted to improve the computational performance of UC models by reducing model fidelity in some dimension. Some have focused on clustering similar units, resulting in cluster UC models with fewer integer variables, rather than binary variables for each unit~\cite{palmintierImpactOperationalFlexibility2015,palmintierHeterogeneousUnitClustering2014,zhangTightUnitAggregation2025,huaRepresentingOperationalFlexibility2018a,wangLongcycleStoragesAligned2023}. However, these clustered formulations can introduce errors by overestimating the capabilities of individual units~\cite{meusApplicabilityClusteredUnit2018,morales-espanaModelingHiddenFlexibility2019}. Other research has focused on representing the temporal data with a smaller set, such as using representative days~\cite{helistoImpactOperationalDetails2021,ponceletImpactLevelTemporal2016,ponceletSelectingRepresentativeDays2017,wogrinWhatTimeperiodAggregation2019}. This reduces the size of models significantly, but can also introduce errors by over- or underestimating the needed investments or intertemporal capabilities of generators. 

A promising direction for improving solution times without sacrificing accuracy has been to obtain tight MILP formulations for UC~\cite{damci-kurtPolyhedralStudyProduction2016,morales-espanaTightCompactMILP2013,morales-espanaTightCompactMILP2013a,rajanMinimumPolytopesUnit2005,gentileTightMIPFormulation2017a,ostrowskiTightMixedInteger2012,queyranneTightMIPFormulations2017,yangNovelProjectedTwo2016}. 
The concept of tightness relates to the gap between the \hlyellow{mixed-integer} feasible region of the\hlyellow{ }model and that of its LP relaxation. The tightest formulation of a MILP is also referred to as the \hlorange{`convex hull of feasible solutions', or `convex hull' in short}. \hlorange{We refer to the facet defining constraints that describe the convex hull as `facets'}. Replacing original constraints by facets results in `tighter' formulations\hlyellow{.}
In practice, models based on the convex hull formulation -- even those which contain only a subset of the facets -- often have a significantly shorter solution time than less tight models, as shown in~\cite{tejada-arangoWhichUnitCommitmentFormulation2020,morales-espanaTightMIPFormulations2015a}.\hlyellow{ } 

However, finding the full convex hull is not straightforward, and it may consist of exponentially many facets\hlpink{.} For example, Lee et al.~\cite{leeMinminpolytopes2004} proved that the number of facets of the convex hull of the UC problem with minimum up and down times is exponential with respect to the number of time periods. Thereafter, Rajan and Takriti~\cite{rajanMinimumPolytopesUnit2005} presented a higher-dimensional formulation and proved that the size of its convex hull grows linearly. The second formulation performed much better in experiments. Specific separation algorithms can be designed to include an exponential number of constraints, but the benefits disappear \hlyellow{for} more complex versions of the UC problem\hlyellow{.} Thus, \hlpink{including additional facets improves the tightness, but the increased model size could negatively impact the solution }\hlteal{time}\hlpink{, so} there is often a trade-off between the tightness and size of a MILP formulation. 

For some UC problems, it is not clear a priori (before solving) which formulation has the ideal balance between tightness and size, such that it is solved the fastest. For example, for start-up and shut-down capability constraints, there exist several formulations that are similar in terms of tightness and size, but it has not been investigated how they compare exactly. The computational performance of a UC formulation can only be determined a posteriori in experiments, but it depends heavily on the case study, the solver, and randomness in the solver, and it could change with each solver update. For example, Knueven et al.~\cite{knuevenMixedIntegerProgramming} computationally compared several UC formulations for different instances in three test sets. They observed that it differed per test set and even per instance and run which formulation\hlyellow{ }solved the fastest. \hlpink{Moreover, a posteriori comparison is often not possible for real-life problem instances because of computational limitations.} 

\hlpink{Thus, a posteriori comparison is not always an option, and its conclusions are not necessarily generalizable.}
\hlpink{For a priori model selection}, we argue that the metrics of tightness and size are the most informative indicators of formulation quality, but there is often a trade-off between the two. Therefore, in this paper, we choose to only consider UC formulations that are based on the convex hull of a subproblem, but similar in size to the smallest possible valid formulation. More specifically, if the convex hull contains exponentially many constraints, we only consider a subset of the facets, which forms a valid formulation and is similar in size to the smallest valid formulation. Furthermore, we introduce \hlyellow{UC} formulations with different levels of\hlyellow{ }detail, rather than only the most detailed model.

The specific contributions of \hlpink{Part I and II of} this \hlyellow{paper} are the following:
\begin{enumerate}
    \item We present a structured overview of UC models with different levels of\hlyellow{ }detail. For each level, we present one or several formulations that meet our chosen trade-off between tightness and size: the tightest formulation of a similar size as the smallest valid formulation.
    \item We provide\hlyellow{ }theoretical insights \hlorange{into} \hlyellow{the tightness of often-used formulations for ramping, for start-up and shut-down costs and capabilities (summarized in Tables~\ref{tab:convexhulls1bin} and~\ref{tab:convexhulls})}\hlblue{, and for }\hlteal{UC within investment problems}\hlblue{ in Section~\ref{sec:inv}.}
    \item We illustrate how incorporating the formulations with different levels of detail in large-scale investment and operational models affects their accuracy and computational performance.
    \item We directly compare, both theoretically and numerically, different formulations for the same problem, which are similar in terms of tightness and size, resulting in new insights into how they perform. 
\end{enumerate}

With these contributions, we aim to help modelers find a suitable UC formulation with a\hlyellow{n adequate} level of detail for their\hlyellow{ }problems. We provide contributions 1) and 2) in Part I of this paper, and the others in Part II.

    
The rest of the paper is structured as follows.\hlyellow{ }Section~\ref{sec:uc}\hlyellow{ }introduces the basic models and concepts, and give\hlyellow{s} a structured overview of the different models that we present in this paper. We then introduce the different model formulations in Section~\ref{sec:ucformulations}, from the most basic to the most extended, and provide proofs on their tightness. 


\section{Background UC Modeling}
\label{sec:uc}
\hlyellow{In this section, we introduce the most basic UC models, provide a clear explanation of the different capabilities, and present a structured overview of the UC models with different levels of detail.}


\subsection{Basic unit commitment}
\label{sec:basicucmodel}

\hlteal{UC}\hlblue{ models consist of constraints that describe the operational capabilities of units. In this paper, we discuss how these operational models can be incorporated into larger energy system optimization models }\hlteal{(ESOMs)}\hlblue{, such as the one presented in~\eqref{eq:esom}. Its objective~\eqref{eq:objesom} minimizes investment costs and operational costs, where $f^\mathrm{inv}$ and $f^\mathrm{p}$ are linear cost functions, whilst satisfying demand in constraint~\eqref{eq:balance}. Constraint~\eqref{eq:inv} enforces investment limits.} 

\noindent\hrulefill \\
\hlblue{\textbf{ESOM}: Energy System Optimization Model} \\
\vspace{-5pt}
\noindent\hrule
\vspace{-5pt}
\begin{subequations}
\label{eq:esom}
\begin{align}
    \min \quad & f^\mathrm{inv}(\uinv_g) + f^\mathrm{p} ( p_{gt},u_{gt}\textcolor{gray}{,v_{gt},w_{gt}}) \hspace{-60pt}& \label{eq:objesom}\\
    \text{s.t.}\quad &\sum_{g\in\mathcal{G}} p_{gt} = D_t \qquad&\forall t\in \mathcal{T} \label{eq:balance}\\
    &u_{gt} \leq \uinv_g \leq 1 \qquad&\forall g\in \mathcal{G},t\in \mathcal{T} \label{eq:inv} \\
    &p_{gt}\in \mathbb{R}_{\geq 0} \qquad&\forall g\in \mathcal{G},t\in \mathcal{T} \label{eq:p}\\
    &\uinv_g\in \mathbb{Z}_{\geq0} \qquad&\forall g\in \mathcal{G} \label{eq:uinv}\\
    &\textit{<UC constraints here>} \nonumber& 
\end{align}
\end{subequations}
\vspace{-10pt}
\noindent\hrule 
\vspace{10pt} 

\hlyellow{W}e present \hlblue{UC models with different levels of detail that can be included in such ESOMs, starting with} the \hlpink{two} most basic UC models\hlpink{, namely \mbox{\hyperref[eq:1binucmodel]{Model I}} and \mbox{\hyperref[eq:3binucmodel]{Model II}}}. We first prove everything for an operational problem, and leave out the the $\uinv$ variable.
\hlblue{However, in Section~\ref{sec:inv} we show that all theoretical insights do hold for investment problems, where constraints~\eqref{eq:inv} and~\eqref{eq:uinv} are added.}

It is important to note that all theoretical insights into the UC models presented in this paper only hold \hlteal{for a single-unit problem, }without considering the balance constraint~\eqref{eq:balance}. The objective is usually irrelevant when considering the convex hull, unless mentioned otherwise. Therefore, \hlblue{we} leave out the objective and balance constraint in the model formulations. \hlblue{To further simplify notation, we drop the $g$ subscript.}

\hyperref[eq:1binucmodel]{Model I} presents the simplest UC constraints, \hlblue{which enforce generator limits~\eqref{eq:pboundup} and~\eqref{eq:pboundslow}}. The objective typically minimizes the total production costs \hlblue{and non-load costs: $f^\mathrm{p}(p_{t},u_{t})=\sum_{t\in\mathcal{T}} \left( \Cp p_t + \Cu u_{t} \right)$.}

\noindent\hrulefill \\
\textbf{Model I}: Basic unit commitment with generation limits \\
\vspace{-5pt}
\noindent\hrule
\vspace{-5pt}
\begin{subequations}
\label{eq:1binucmodel}
\begin{align}
    &p_{t} \leq \Pup  u_{t} \qquad&\forall t\in \mathcal{T} \label{eq:pboundup}\\
    &p_{t} \geq \Plow  u_{t} \qquad&\forall t\in \mathcal{T} \label{eq:pboundslow}\\
    &u_{t} \in \{0,1\} \qquad&\forall t\in \mathcal{T} \label{eq:u}
\end{align}
\end{subequations}
\vspace{-10pt}
\noindent\hrule 
\vspace{10pt} 

There are often more limits on the production,\hlyellow{ }such as ramping limits, starting up limits, etc. Additional variables can be introduced to help model these properties, namely start-up variable $v_{gt}$ and shut-down variable $w_{gt}$. A basic UC model that includes these variables is given in \hyperref[eq:3binucmodel]{Model II}.\footnote{
\hlblue{When considering an investment problem, constraint~\eqref{eq:wcommitment} should be replaced by $w_t\leq\uinv - u_t$. We elaborate on theoretical results for this extension to investment problems in Section~\ref{sec:inv}.}} 
The objective \hlblue{usually includes start-up and shut-down costs: $f^\mathrm{p}(p_{t},u_{t},v_t,w_t)=\sum_{t\in\mathcal{T}} \left( \Cp p_t + \Cu u_{t} + \SUC_g  v_{t} + \SDC_g  w_{t} \right)$}. \hlgreen{In the remainder of this paper, we assume that all costs are nonnegative.}

\noindent\hrulefill \\
\textbf{Model II}: Generation limits and start-up \& shut-down costs \\
\vspace{-5pt}
\noindent\hrule
\vspace{-5pt}
\begin{subequations}
\label{eq:3binucmodel}
\begin{align}
    &\eqref{eq:pboundup}-\eqref{eq:u} \notag \\
    &u_{t} - u_{t-1} = v_{t} - w_{t} \qquad&\forall t\in \Tzero \label{eq:commitment}\\
    &v_{t} \leq u_{t} \qquad&\forall t\in \Tzero \label{eq:vcommitment}\\
    &w_{t} \leq 1 - u_{t} \qquad&\forall t\in \Tzero \label{eq:wcommitment}\\
    &v_{t} \in \{0,1\} \qquad&\forall t\in \Tzero \label{eq:v}\\
    &w_{t} \in \{0,1\} \qquad&\forall t\in \Tzero \label{eq:w}
\end{align}
\end{subequations}
\vspace{-10pt}
\noindent\hrule 
\vspace{10pt} 

In the next section, we explain all other \hlyellow{constraints} that can be added to these basic models.  

\subsection{Generator properties}
\label{sec:notation}
The energy production of \hlyellow{slower}, thermal\hlyellow{ }generators is \hlpink{also} bounded by \hlpink{ramping, start-up and shut-down, and minimum up and down time constraints. In this section, we illustrate these properties with an example and explain them in detail}. \hlyellow{Figure~\ref{fig:capabilitiesexample} shows} an example of an output trajectory of\hlpink{ }a unit over 12 hours, as well as the corresponding variable values. 

\begin{figure}[h!]
    \centering
    \includegraphics[width=\linewidth]{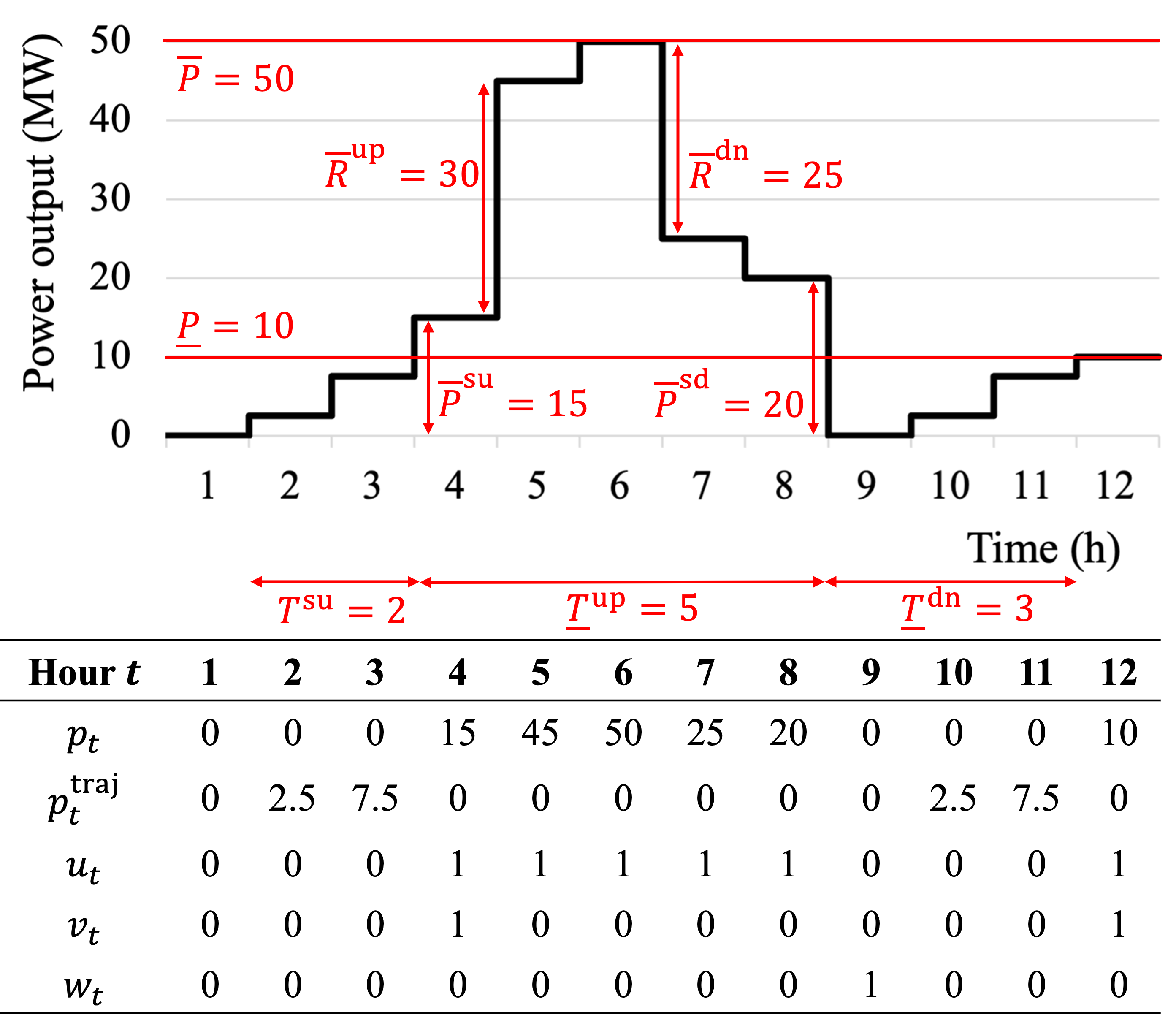}
    \caption{An example of an output trajectory of a unit over 12 hours (black line), its capabilities/limits (red annotations), and corresponding variable values.}
    \label{fig:capabilitiesexample}
\end{figure}

The unit produces between its minimum and maximum limits in hours 4 to 8. This production is also bounded by its ramping \hlyellow{limits} $\RU$ and $\RD$ from one hour to the next. The corresponding variable $u_t$ is 1 in those hours, indicating that the unit is \textit{up}. Note that this is not the same as the unit being \textit{on}. The unit already turns \textit{on} in hour 2, but it has a slow start-up trajectory of two hours. The power output during this trajectory is modeled by a separate variable $p^{\mathrm{traj}}_t$. 
Similarly, if a unit has a shut-down trajectory, we distinguish between the unit being \textit{down} or \textit{off}. 

The example also shows that the unit has a different start-up and shut-down \textit{capability}. This is the maximum amount that a unit can generate in the hour it starts \textit{up} (hour 4 here) and in the hour before it shuts \textit{down} (hour 8 here), respectively. Logically, these capabilities \hlorange{$\SU$} and $\SD$ are always at least $\Plow$ and at most $\Plow + \RU$ and $\Plow + \RD$, respectively. Start-up variable $v_t$ and shut-down variable $w_t$ are 1 in the hour that the unit starts up and shuts down, respectively.

The last property that can limit production is minimum \textit{up} and \textit{down} time. This relates to the minimum number of consecutive hours $u_t$ should be 1 or 0, respectively. If the unit has a start-up trajectory and/or shut-down trajectory, then its minimum downtime is at least as long as the duration of those trajectories together, so $\mdt\geq \sut + \sdt$.



\begin{figure*}[h!]
    \centering
    \includegraphics[width=\linewidth]{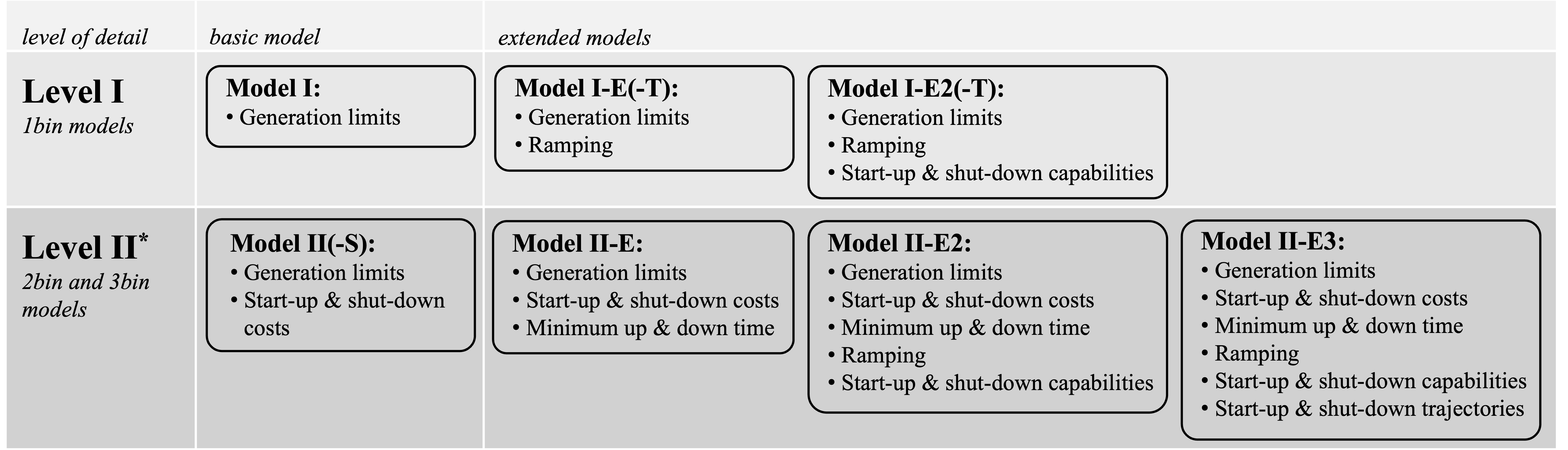}
    \caption{An overview of different levels of detail, the basic and extended models in each level of detail, and the constraints of generators that each model captures. For some, we also present a tighter (T) or smaller (S) model. \\
    $^{*}$ All models in this level can be written as a 2bin model or as an equivalently tight 3bin model.}
    \label{fig:levelsofdetail}
\end{figure*}

\subsection{Defining Different Levels of Detail}
\label{sec:defininglevels}

We now present a structured overview of different types of UC models, that have different levels of detail. The overview can help to select the appropriate model for a certain type of generator, or to select the highest level of detail that is computationally possible. To this end, we first divide UC models into the following two categories, depending on the number of binary variables:

     \textbf{1bin:} UC models containing one binary variable per time period for each generator, representing its commitment (up/down) status. 
     
    \textbf{2bin or 3bin:} UC model containing additional variables representing start-up and shut-down of the unit. We denote models that contain both by \textit{3bin}, and start-up only by \textit{2bin}. For some properties, such as minimum up and down times, introducing these additional variables can improve tightness and/or size of the model. Note that the shut-down variable is defined by equality constraint~\eqref{eq:commitment}, so every 3bin model can be directly rewritten \hlorange{as} a 2bin model. These models are equal in terms of tightness, but the number of variables and (non-zero entries in) constraints\hlyellow{ }differs. Furthermore, these start-up and shut-down variables are usually defined as binary, but they can technically be relaxed \hlorange{to $0\leq\hdots\leq 1$} without changing the set of feasible solutions to the MIP model, which we explain further in Section~\ref{sec:susdvariables}. This results in a model with just one set of binary variables. However, to differentiate them from 1bin models, we classify every model that contains start-up and shut-down variables as 2bin/3bin, regardless of whether the variables are defined as binary or continuous.


Within these two categories, we now define several UC models with different levels of detail. \hlyellow{Figure~\ref{fig:levelsofdetail} presents} a schematic overview of the models and the generator properties that they model. For each of the two categories, we present a basic model and extensions of that model. \hlteal{We argue that most generators can be accurately modeled using one of these models.} 

\subsection{Selecting the appropriate model}

In practice, models of all different levels of detail can be employed for different purposes, applications, and generators. A modeler can use the overview in Figure~\ref{fig:levelsofdetail} to select the model with the desired level of detail for a specific generator. For example, when one desires to model minimum up and down times or start-up and shut-down trajectories, it becomes clear from the overview that a 2bin or 3bin model is \hlyellow{advised}. Contrary to common \hlorange{assumptions}, we argue that a 2bin/3bin model is needed for including start-up and shut-down costs as well, and explain this further in Section~\ref{sec:susdcosts}. Furthermore, if the selected UC model results in a longer solution time than desired, a model with a lower level of detail can be selected instead, while accepting a\hlyellow{ possible overestimation of the flexibility of a unit} due to simplification.

However, in some cases, it might not be obvious which model is the most suitable. For example, when one wants to model ramping and start-up and shut-down capabilities, the 1bin Model I-E2 can be used, but \hlyellow{we present} two different formulations for this model. One is tighter (denoted by T), \hlyellow{while} the other \hlteal{leaves out the facets that are not affecting the integer feasible region, so it has fewer constraints,} and it is unclear \hlyellow{a priori} which one performs better computationally. Moreover, 3bin Model II-E2 could also be used to model this problem, by setting the start-up costs to 0 and the minimum up and down time to 1.\hlyellow{ } 
Therefore, we \hlyellow{focus on formulations that are based on the convex hull, and analyze their tightness in this paper}. In Part II, we compare the models from Figure~\ref{fig:levelsofdetail}\hlyellow{ }numerically. 

\section{UC Model Formulations}
\label{sec:ucformulations}
For each of the models in Figure~\ref{fig:levelsofdetail}, we present a formulation that is based on the convex hull of the \hlorange{problem at hand}, but similar in size to the smallest valid formulation. Therefore, we often begin by discussing the simplest valid formulation, and thereafter present the findings on the convex hull (by us or from the literature) that we use. We remind the reader that all theoretical insights only hold for the single-unit problem, without considering the energy balance constraint. \hlblue{Extensions of these theoretical insights to investment problems are presented at the end, in Section~\ref{sec:inv}.}

\subsection{Level I: 1bin UC Models}
\label{sec:1bin}
In this section, we consider 1bin UC models that only contain commitment variables as binary variables\hlorange{, starting with the most basic one}. 
Thereafter, we present some new facets for ramping down and shut-down capabilities, as well as some new proofs on the convex hull of these properties.\hlorange{ }

\subsubsection{Generation limits (1bin)}
\label{sec:generationlimits}

The most basic 1bin model containing only generation limits and linear production costs is \hyperref[eq:1binucmodel]{Model I}\hlyellow{, as presented previously in Section~\ref{sec:basicucmodel}}. Garver~\cite{garverPowerGenerationScheduling1962}, who was the first to model \hlteal{UC} in 1962, included piecewise linear production costs in his original formulation, rather than regular linear costs. Others, such as Knueven et al.~\cite{knuevenMixedIntegerProgramming}, further tightened this formulation, but all these formulations require additional variables, so we do not consider \hlorange{these costs}. 


The LP relaxation of this model is obtained by relaxing the integrality constraint~\eqref{eq:u}, i.e., $0 \leq u_{t} \leq 1$. In fact, only the constraint $u_{t}\leq 1$ is needed in the LP relaxation, since constraints~\eqref{eq:pboundup} and~\eqref{eq:pboundslow} together imply that $u_{t}\geq 0$. 
This simplified LP relaxation actually \hlgreen{defines} the convex hull of the feasible solutions of this problem. Therefore, $u_{t}$ is integer in all extreme points of the feasible space. This follows directly from Lemma~\ref{thm:thelemma}, by considering a polytope $P$ consisting of the commitment variables $u_{t}$ and constraints $0\leq u_{t}\leq 1$, and adding the production variables $p_{t}$ with constraints~\eqref{eq:pboundup} and~\eqref{eq:pboundslow}. 

\begin{lemma}
    Suppose that $P = \{x\in \mathbb{R}^n : Ax \leq b\}$ with $A\in \mathbb{R}^{n\times m}$ and $b\in \mathbb{R}^m$ is an integer \hlorange{polytope}, i.e., $P = conv(P \cap \mathbb{Z}^n)$\hlorange{ where $conv(S)$ denotes the convex hull of set S}. We now introduce $k$ new variables and $2k$ new constraints, such that we obtain the new \hlorange{polytope} $P' = \{ (x,y) \in \mathbb{R}^{n}\times\mathbb{R}^{k} : Ax \leq b, Cx \leq y \leq D x \}$, with $C, D \in \mathbb{R}^{k \times n}$. If $Cx \leq Dx$ holds for all $x\in P$, then $x$ is integer in all extreme points of $P'$. 
    \label{thm:thelemma}
\end{lemma}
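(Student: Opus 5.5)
Let $(\bar x,\bar y)$ be an extreme point of $P'$. The plan is to show that $\bar x$ is an extreme point of $P$. Since $P$ is an integer polytope, every extreme point of $P$ is integral, so $\bar x \in \mathbb{Z}^n$ follows immediately.

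We argue by contradiction. Suppose $\bar x$ is not an extreme point of $P$. Then there exist $x^1 \neq x^2$ in $P$ and $\lambda\in(0,1)$ with $\bar x = \lambda x^1 + (1-\lambda)x^2$. We must lift $x^1, x^2$ to points of $P'$ whose same convex combination gives $\bar y$. The linear parametrisation does this. For each coordinate $j=1,\dots,k$, the value $\bar y_j$ lies in the interval $[(C\bar x)_j,(D\bar x)_j]$, so we can write
\[
\bar y_j = (1-\mu_j)(C\bar x)_j + \mu_j (D\bar x)_j
\]
for some $\mu_j\in[0,1]$. If the interval is degenerate, any $\mu_j$ works, say $\mu_j=0$. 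Define $y^i_j = (1-\mu_j)(Cx^i)_j + \mu_j(Dx^i)_j$ for $i=1,2$.

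Two facts need checking. First, the hypothesis $Cx\le Dx$ on $P$ gives $(Cx^i)_j \le y^i_j \le (Dx^i)_j$. Since also $Ax^i\le b$, we get $(x^i,y^i)\in P'$. Second, the map $x\mapsto y$ is linear for the fixed coefficients $\mu_j$, so
\[
\lambda y^1 + (1-\lambda) y^2 = \bar y .
\]
Hence $(\bar x,\bar y) = \lambda(x^1,y^1) + (1-\lambda)(x^2,y^2)$ with $(x^1,y^1)\neq(x^2,y^2)$, because $x^1\ne x^2$. This contradicts the extremality of $(\bar x,\bar y)$.

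The main obstacle is the lifting step. A naive choice such as $y^i=\bar y$ can fail, because $\bar y$ need not lie within $[Cx^i, Dx^i]$. This is why the convex-combination parameters $\mu_j$ have to be fixed from $\bar x$ and then transported to $x^i$; the hypothesis $Cx\le Dx$ on all of $P$, not just at $\bar x$, is exactly what keeps the lifted points feasible. A minor remark: $P'$ is bounded, since $x$ ranges over the polytope $P$ and $y$ is sandwiched between linear functions of $x$, so $P'$ is a polytope and the extreme-point language is meaningful. (The dimension convention $A\in\mathbb{R}^{n\times m}$ in the statement should read $m\times n$, which does not affect the argument.)
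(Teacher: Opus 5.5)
Your proof is correct, and it takes a different and more direct route than the paper.

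\textbf{The paper's route.} The paper eliminates $y$ by Fourier--Motzkin and notes that the resulting inequalities $Cx\le Dx$ are redundant on $P$. From this it gets $\text{proj}_x(P')=P$. It then infers that the $x$-part of every vertex of $P'$ is a vertex of $P$. This comes after a separate remark that the new constraints meet at the origin, which need not even lie in $P'$.

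\textbf{Your route.} You show directly that $\bar x$ is extreme in $P$. You lift an arbitrary decomposition $\bar x=\lambda x^1+(1-\lambda)x^2$ into $P'$ by fixing the interpolation weights $\mu_j$ at $\bar x$ and transporting them to $x^1$ and $x^2$.

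\textbf{What each approach buys.} Your argument is rigorous at exactly the delicate point. Equality of the projection alone does not imply that vertices project to vertices. For example, $\mathrm{conv}\{(0,0),(1,0),(\tfrac12,1)\}$ projects onto the integral segment $[0,1]$, yet it has a vertex with $x=\tfrac12$. The paper's last inference therefore silently relies on the special sandwich structure: one linear lower bound and one linear upper bound per $y_j$, with $Cx\le Dx$ on all of $P$. Your lifting step is precisely where that structure is used, including the hypothesis on all of $P$ rather than only at $\bar x$. It also needs no special treatment of the origin. The paper's projection viewpoint, in turn, makes clear that adding the $y$ variables leaves the feasible $x$-region unchanged. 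It also matches the Fourier--Motzkin machinery the paper reuses later, in Theorem~\ref{obs:3binsusdcap} and Lemma~\ref{obs:thminvestment}.
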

\begin{proof}
    First, we observe that the $2k$ new constraints intersect in the origin. Thus, $x^*$ in this vertex $(x^*,y^*)=(0,0)$ is integer. Therefore, we only need to consider the intersection between the $2k$ new constraints and the original constraints of $P$, and verify that $x$ is integer in these vertices. To this end, we obtain the projection of $P'$ onto $y=0$, so a projection from the $(x,y)$-space to the $x$-space, denoted by $\text{proj}_x(P')$, by performing Fourier-Motzkin elimination on the new $y$ variables. The lower bounds on the $y$ variables are $y\geq Cx$, and the upper bounds are $y\leq Dx$. Combining all upper and lower bounds on $y$ to eliminate the variables gives us the constraint set $Cx \leq Dx$, so $\text{proj}_x(P') = \{ x\in \mathbb{R}^n : Ax \leq b, Cx \leq D x \}$. We already assumed the constraint set $Cx \leq Dx$ to hold for all $x\in P = \{x\in \mathbb{R}^n : Ax \leq b\}$, so these constraints are redundant, and the projection becomes $\text{proj}_x(P') = \{ x\in \mathbb{R}^n : Ax \leq b \}$, which is equal to $P$. Thus, by this projection argument, we conclude that the $x^*$, in the vertices $(x^*,y^*)$ of $P'$ that are created by intersecting the $2k$ new constraints with the original constraints, are equal to the vertices of $P$, which are integer. So $x^*$ is integer in all vertices $(x^*,y^*)$ of $P'$.
\end{proof}

\subsubsection{Ramping (1bin)}
\label{sec:rampsimple}
In this section, we present two different 1bin models for UC with ramping: one contains fewer constraints, the other contains additional facets, making it tighter. They are largely based on the work by Damc\i-Kurt et al.~\cite{damci-kurtPolyhedralStudyProduction2016}, who obtained 1bin facets for ramping up with start-up capabilities. We\hlorange{ }use their result\hlyellow{s} here by assuming that the start-up capabilities are the same as the regular ramping capabilities, so by substituting $\SU=\Plow+\RU$ in their constraints. After discussing the simplest valid constraints, we present their 1bin constraints that consider ramping up, and we add facets for ramping down. \hlorange{Furthermore, we present the 1bin two-period convex hull for ramping up \textit{and} down together. Our new theoretical results align with recent findings by Tian et al.~\cite{tianPolyhedralStudyUnit2026}, who claimed the same facets for ramping down and shut-down capabilities and the same convex hull formulation independently.}



Intuitively, the simplest constraint to limit the maximum ramp-up $\RU$ of a unit would the following:
\begin{align*}
    p_{t}-p_{t-1} \leq \RU,\qquad \forall t\in\mathcal{T}.
\end{align*}
However, if the minimum limit $\Plow$ is larger than the maximum ramp rate $\RU$, this constraint would imply that the unit could never \hlyellow{start up}. The simplest valid constraint, though not a facet, is:
\begin{align*}
    p_{t} - p_{t-1} \leq \RU + \Plow u_{t} - \Plow u_{t-1},\qquad \forall t\in\mathcal{T}.
\end{align*}

Damc\i-Kurt et al.~\cite{damci-kurtPolyhedralStudyProduction2016} proved that the very similar constraint~\eqref{eq:rutight} \hlorange{(see below)} is a facet of the two-period convex hull of the ramping up problem. 
Using the symmetry between ramping up and ramping down constraints, we derive the ramp-down analogue of constraint~\eqref{eq:rutight}, which is constraint~\eqref{eq:rdtight}.  
Adding constraints~\eqref{eq:rutight} and~\eqref{eq:rdtight} to \hyperref[eq:1binucmodel]{Model I} results in \hyperref[eq:1binucmodelecompact]{Model I-E}, which accurately models UC with ramping. 

\noindent\hrulefill \\
\textbf{Model I-E}: Generation limits and ramping \\
\vspace{-5pt}
\noindent\hrule
\vspace{-5pt}
\begin{subequations}
\label{eq:1binucmodelecompact}
\begin{align}
    &\eqref{eq:pboundup}-\eqref{eq:u} \notag\\
    &p_{t} - p_{t-1} \leq (\Plow + \RU)u_{t} - \Plow u_{t-1} \quad&\forall t\in \Tzero \label{eq:rutight}\\
    &p_{t-1} - p_{t} \leq (\Plow + \RD)u_{t-1} - \Plow u_{t} \quad&\forall t\in \Tzero \label{eq:rdtight}
\end{align}
\end{subequations}
\vspace{-12pt}
\noindent\hrule 
\vspace{10pt} 

However, the tightness of the formulation can be increased further. Damc\i-Kurt et al.~\cite{damci-kurtPolyhedralStudyProduction2016} proved that constraint~\eqref{eq:pbound1bintight}, which is an upper bound on $p_{t}$, is also a facet of the single-unit two-period convex hull ramping up problem.
We introduce the additional constraint~\eqref{eq:plowbound1bintight} for ramping down. 
Adding additional facets~\eqref{eq:pbound1bintight} and~\eqref{eq:plowbound1bintight} to \hyperref[eq:1binucmodelecompact]{Model I-E} results in \hyperref[eq:1binucmodele]{Model I-E-T}. \hlblue{Note that when modeling clustered }\hlteal{UC}\hlblue{, \mbox{\hyperref[eq:1binucmodele]{Model I-E-T}} should be used. The constraints from \mbox{\hyperref[eq:1binucmodelecompact]{Model I-E}} alone do not model the problem for clustered }\hlteal{UC}\hlblue{ accurately.}\footnote{Consider, for example, a situation where $u_{t-1}=1$ and one additional unit starts up in period $t$, so $u_{t}=2$. Constraint~\eqref{eq:rutight} then becomes $p_{t}-p_{t-1}\leq \Plow + 2\RU$. Generation values $p_{t-1}=\Pup$ and $p_{t}=\Pup+\Plow + 2\RU$ satisfy this constraint. However, they are not actually feasible. The unit that starts up in period $t$ can produce at most $\Plow + \RU$, so $p_{t}$ can be at most $\Pup+\Plow + \RU$. Knueven et al.~\cite{knuevenExploitingIdenticalGenerators2018} refer to this infeasible scenario as one unit ``stealing'' ramping capabilities from the other. Constraint~\eqref{eq:pbound1bintight} becomes $p_{t}\leq \Pup + \Plow + \RU$, so this constraint can ensure that the clustered \hlteal{UC} problem is accurately modeled.}

\noindent\hrulefill \\
\textbf{Model I-E-T}: Generation limits and ramping \\
\vspace{-5pt}
\noindent\hrule
\vspace{-5pt}
\begin{subequations}
\label{eq:1binucmodele}
\begin{align}
    &\eqref{eq:pboundup}-\eqref{eq:u},\eqref{eq:rutight},\eqref{eq:rdtight} \notag\\
    &p_{t} \leq (\Plow + \RU)u_{t} + (\Pup - \Plow - \RU) u_{t-1} \quad&\forall t\in \Tzero \label{eq:pbound1bintight}\\
    &p_{t-1} \leq (\Plow + \RD)u_{t-1} + (\Pup - \Plow - \RD) u_{t} &\forall t\in \Tzero \label{eq:plowbound1bintight} 
\end{align}
\end{subequations}
\vspace{-10pt}
\noindent\hrule 
\vspace{10pt} 

Damc\i-Kurt et al.~\cite{damci-kurtPolyhedralStudyProduction2016} obtained the 1bin two-period convex hull of the ramping up problem. 
\hlyellow{Moreover, Tian et \mbox{al.\cite{tianPolyhedralStudyUnit2026}} }\hlorange{recently}\hlyellow{ obtained the 1bin two-period convex hull of for ramping up \textit{and} down together}\hlorange{ (\mbox{\hyperref[eq:1binucmodele]{Model I-E-T}})}\hlorange{. Independently, we obtained the same convex hull formulation, and in Proposition~\ref{obs:rampingconvexhull} in Appendix~\ref{app:proofs1binramp}, we present different, shorter proof for this result. The proof }\hlyellow{follows from the more extensive \textit{3bin} two-period convex hull for ramping and start-up and shut-down capabilities, which we prove later in Section~\ref{sec:susdcap3bin}.} 
It follows that \hyperref[eq:1binucmodele]{Model I-E-T} is the tightest possible formulation for the single-unit two-period ramping problem.

\begin{table*}[!ht]
    \centering
    \caption{Theoretical overview of 1bin UC convex hulls 
    }
    \label{tab:convexhulls1bin}
    \begin{tabular}{lll}
    \hline
        Problem & Convex hull & Proven by \\
        \hline
        1bin generation limits (\hyperref[eq:1binucmodel]{\textbf{Model I}}) for $|\mathcal{T}|$ periods &~\eqref{eq:pboundup},\eqref{eq:pboundslow}, $u_{t}\leq 1$ & us: Lemma~\ref{thm:thelemma} \\
        1bin ramping up for $|\mathcal{T}|=2$ &~\eqref{eq:pboundup},\eqref{eq:pboundslow},\eqref{eq:rutight},\eqref{eq:pbound1bintight}, $u_{t}\leq 1$  & \cite{damci-kurtPolyhedralStudyProduction2016}\\
        1bin ramping down for $|\mathcal{T}|=2$ &~\eqref{eq:pboundup},\eqref{eq:pboundslow},\eqref{eq:rdtight},\eqref{eq:plowbound1bintight}, $u_{t}\leq 1$ & \cite{tianPolyhedralStudyUnit2026} \& us: Proposition~\ref{obs:rampingconvexhull}\\
        1bin ramping up \textit{and} down (\hyperref[eq:1binucmodele]{\textbf{Model I-E-T}}) for $|\mathcal{T}|=2$ &~\eqref{eq:pboundup},\eqref{eq:pboundslow},\eqref{eq:rutight},\eqref{eq:rdtight},\eqref{eq:pbound1bintight},\eqref{eq:plowbound1bintight}, $u_{t}\leq 1$ & \cite{tianPolyhedralStudyUnit2026} \& us: Proposition~\ref{obs:rampingconvexhull}\\
        1bin ramping up and start-up capability for $|\mathcal{T}|=2$ &~\eqref{eq:pboundup},\eqref{eq:pboundslow},\eqref{eq:rutight},\eqref{eq:su1bintight},\eqref{eq:supbound1bintight}, $u_{t}\leq 1$ & \cite{damci-kurtPolyhedralStudyProduction2016}\\
        1bin ramping down and shut-down capability for $|\mathcal{T}|=2$ &~\eqref{eq:pboundup},\eqref{eq:pboundslow},\eqref{eq:rdtight},\eqref{eq:sd1bintight},\eqref{eq:sdplowbound1bintight}, $u_{t}\leq 1$ & \cite{tianPolyhedralStudyUnit2026} \& us: Proposition~\ref{obs:susdconvexhull}\\
        1bin ramping up \textit{and} down and start-up \textit{and} shut-down &~\eqref{eq:pboundup},\eqref{eq:pboundslow},\eqref{eq:rutight},\eqref{eq:rdtight},\eqref{eq:su1bintight},\eqref{eq:sd1bintight},\eqref{eq:supbound1bintight},\eqref{eq:sdplowbound1bintight}, $u_{t}\leq 1$ & \cite{tianPolyhedralStudyUnit2026} \& us: Proposition~\ref{obs:susdconvexhull}\\
        \quad capabilities (\hyperref[eq:1binucmodele2]{\textbf{Model I-E2-T}}) for $|\mathcal{T}|=2$ & & \\
        \hline
    \end{tabular}
\end{table*}

\subsubsection{Start-up and shut-down capabilities (1bin)}
\label{sec:susdcap1bin}

The simplest constraint that ensures a power limit of $\SU$ when starting up, and respects the ramping limits, would be the following:
\begin{align*}
    p_{t}-p_{t-1} \leq \SU - (\SU-\RU)u_{t-1},\qquad \forall t\in\mathcal{T}.
\end{align*}
However, tighter formulations for this problem have been found in the literature. 
As mentioned in the previous section, Damc\i-Kurt et al.~\cite{damci-kurtPolyhedralStudyProduction2016} obtained \hlorange{the} 1bin convex hull formulation for ramping up with start-up capabilities. 
\hlorange{In this section, we present the 1bin convex hull for ramping and start-up and shut-down capabilities, which align with recent findings by Tian et al.\cite{tianPolyhedralStudyUnit2026}, who claimed the same convex hull formulation independently.}

\hlorange{More specifically, }for the two-period ramping up and start-up problem, Damc\i-Kurt et al.~\cite{damci-kurtPolyhedralStudyProduction2016} proved that constraints~\eqref{eq:rutight},~\eqref{eq:su1bintight}, and~\eqref{eq:supbound1bintight} are facets\hlorange{, and we add constraints~\eqref{eq:rdtight},~\eqref{eq:sd1bintight}, and~\eqref{eq:sdplowbound1bintight} for ramping down and shut-down}. Constraints~\eqref{eq:su1bintight} and ~\eqref{eq:sd1bintight} alone are sufficient to model the problem accurately; the others just improve the tightness of the model. Therefore, we first introduce \hyperref[eq:1binucmodele2compact]{Model I-E2}. 

\noindent\hrulefill \\
\textbf{Model I-E2}: Generation limits, ramping, and start-up and shut-down capabilities \\
\vspace{-5pt}
\noindent\hrule
\vspace{-5pt}
\begin{subequations}
\label{eq:1binucmodele2compact}
\begin{align}
    &\eqref{eq:pboundup}-\eqref{eq:u} \notag\\
    &p_{t} - p_{t-1} \leq \SU u_{t} - (\SU - \RU) u_{t-1} &\forall t\in \Tzero \label{eq:su1bintight}\\
    &p_{t-1} - p_{t} \leq \SD u_{t-1} - (\SD - \RD) u_{t} &\forall t\in \Tzero \label{eq:sd1bintight} 
\end{align}
\end{subequations}
\vspace{-10pt}
\noindent\hrule 
\vspace{10pt} 

Furthermore, we introduce \hyperref[eq:1binucmodele2]{Model I-E2-T}, which is a tighter version of \hyperref[eq:1binucmodele2compact]{Model I-E2}, that contains additional facets~\eqref{eq:rutight},~\eqref{eq:rdtight},~\eqref{eq:supbound1bintight}, and~\eqref{eq:sdplowbound1bintight}. Note \hlblue{again} that when modeling clustered \hlteal{UC}, \hyperref[eq:1binucmodele2]{Model I-E2-T} should be used. The constraints from \hyperref[eq:1binucmodele2compact]{Model I-E2} alone do not model the problem for clustered \hlteal{UC} accurately.

\noindent\hrulefill \\
\textbf{Model I-E2-T}: Generation limits, ramping, and start-up and shut-down capabilities \\
\vspace{-5pt}
\noindent\hrule
\vspace{-5pt}
\begin{subequations}
\label{eq:1binucmodele2}
\begin{align}
    &\eqref{eq:pboundup}-\eqref{eq:u},\eqref{eq:rutight},\eqref{eq:rdtight},\eqref{eq:su1bintight},\eqref{eq:sd1bintight} \hspace{-50pt} \notag\\
    &p_{t} \leq \SU u_{t} + (\Pup - \SU) u_{t-1} \quad&\forall t\in \Tzero \label{eq:supbound1bintight}\\
    &p_{t-1} \leq \SD u_{t-1} + (\Pup - \SD) u_{t} \quad&\forall t\in \Tzero \label{eq:sdplowbound1bintight} 
\end{align}
\end{subequations}
\vspace{-10pt}
\noindent\hrule 
\vspace{10pt} 



\hlyellow{Tian et al.\cite{tianPolyhedralStudyUnit2026} }\hlorange{recently}\hlyellow{ claimed that the LP relaxation of \mbox{\hyperref[eq:1binucmodele2compact]{Model I-E2}} }\hlgreen{defines}\hlyellow{ the 1bin two-period convex hull of the UC problem with ramping and start-up and shut-down capabilities.} In Proposition~\ref{obs:susdconvexhull} in Appendix~\ref{app:proofs1binramp}, \hlorange{we present a different, shorter proof for this result, }which also follows from the more extensive 3bin proof in Section~\ref{sec:susdcap3bin}.
It follows that \hyperref[eq:1binucmodele2]{Model I-E2-T} is the tightest possible model for the two-period ramping and start-up and shut-down capabilities problem.

\subsubsection{Overview proven facets and convex hulls (1bin)}
\label{sec:overview1bin}

To conclude Section~\ref{sec:1bin}, we now present an overview of the theoretical insights. Table~\ref{tab:convexhulls1bin} \hlyellow{provides} an overview of \hlyellow{proven} 1bin convex hulls\hlyellow{.} It lists all the sets of constraints \hlyellow{(facets)} that are proven to describe the convex hull of a certain 1bin UC problem, and who proved \hlyellow{them}. 

It follows from Table~\ref{tab:convexhulls1bin} that basic UC \hyperref[eq:1binucmodel]{Model I} is the tightest possible formulation for the MILP problem it models. The same is true for \hyperref[eq:1binucmodele]{Model I-E-T} and \hyperref[eq:1binucmodele2]{Model I-E2-T} for the two-period problem. \hyperref[eq:1binucmodelecompact]{Model I-E} and \hyperref[eq:1binucmodele2compact]{Model I-E2} are less tight, but their formulation is \hlyellow{much} smaller (contains fewer constraints). 

\subsection{Level II: 3bin UC Models}
\label{sec:3bin}

In this section, we consider 3bin UC models that contain the additional start-up and shut-down variables \hlgreen{in the constraint set. These variables are essential to model start-up and shut-down costs, and they enable the construction of polynomial-size convex hull formulations for models that would otherwise require exponentially many constraints.} We present new theoretical insights into the tightness of two often-used formulations for start-up and shut-down costs, as well as the 3bin convex hull for ramping and start-up and shut-down capabilities.

\subsubsection{Start-up and shut-down variables}
\label{sec:susdvariables}

We begin by considering \hyperref[eq:3binucmodel]{Model II}, introduced in Section~\ref{sec:basicucmodel}, and by explaining some general theoretical insights into 3bin models. 


As briefly mentioned in Section~\ref{sec:defininglevels}, if $u_{t}$ is defined as binary, then the values of $v_{t}$ and $w_{t}$ are completely fixed by constraints~\eqref{eq:commitment}-\eqref{eq:wcommitment}~\cite{rajanMinimumPolytopesUnit2005}. Thus, the start-up and shut-down variables can be defined as binary or as non-negative real variables; both result in the same solutions. From a solving perspective, the assumed benefit of relaxing the integrality constraints is that fewer variables need to be branched on, leading to a smaller enumeration tree~\cite{ostrowskiTightMixedInteger2012}. However, modern MIP solvers contain tools that can exploit the integrality of these variables, such as cutting planes and node presolve~\cite{ostrowskiTightMixedInteger2012,morales-espanaTightCompactMILP2013}. Therefore, defining the variables as integers \hlorange{could} result in faster computations of the optimal solution. The best strategy, according to Ostrowski et al.~\cite{ostrowskiTightMixedInteger2012}, would be to adjust the branching priority of these variables so that they are chosen only after all other integer variables have been fixed.

Furthermore, 2bin formulations are occasionally used, for example by using\hlyellow{ }projection techniques~\cite{yangNovelProjectedTwo2016}. Any of the presented 3bin models in this paper can be easily rewritten to a 2bin model by substituting $w_{t} = v_{t} - u_{t} + u_{t-1}$ from~\eqref{eq:commitment} in constraints~\eqref{eq:wcommitment} and $w_{t}\geq0$, resulting in constraints~\eqref{eq:vcommitment2bin} and~\eqref{eq:vlowerbound}. 

\begin{subequations}
\begin{align}
    &v_{t} \leq 1 - u_{t-1} \qquad&\forall g\in \mathcal{G},t\in \Tzero \label{eq:vcommitment2bin}\\
    &v_{t} \geq u_{t} - u_{t-1} \qquad&\forall g\in \mathcal{G},t\in \Tzero \label{eq:vlowerbound}
\end{align}
\end{subequations}




\subsubsection{Start-up and shut-down costs}
\label{sec:susdcosts}

In this section, we consider the costs associated with a unit starting up and shutting down. There are two different ways that basic start-up and shut-down costs are typically modeled. We explain both formulations, and show that they are equivalent with some new theoretical insights. \hlgreen{We remind the reader that we assume that all costs are nonnegative.}

It should be mentioned that we only consider one type of start-up and shut-down costs. Others, such as Morales-Espa\~{n}a et al.~\cite{morales-espanaTightCompactMILP2013}, Queyranne and Wolsey~\cite{queyranneTightMIPFormulations2017}, and Knueven et al.~\cite{knuevenNovelMatchingFormulation2020}, presented (tight) formulations \hlyellow{for an extension with} costs for start-up that may depend on the time that the generator has been idle. This is often referred to as hot, warm, and cold start-ups, but \hlyellow{tight} formulations \hlyellow{for this problem} require additional variables. 

The start-up and shut-down variables $v_{gt}$ and $w_{gt}$ can be used to model start-up and shut-down costs in the objective, as explained in Section~\ref{sec:basicucmodel}, resulting in \hlblue{the constraints of} \hyperref[eq:3binucmodel]{Model II}\hlyellow{, which we also presented previously in Section~\ref{sec:basicucmodel}}. 
We show in Theorem~\ref{obs:3binCH} that \hyperref[eq:3binucmodel]{Model II} is the tightest possible formulation of this problem.


\begin{theorem}
\label{obs:3binCH}
    Constraints~\eqref{eq:pboundup},~\eqref{eq:pboundslow},~\eqref{eq:commitment}-\eqref{eq:wcommitment}, and $v_{t},\ w_{t}\geq 0$ describe the convex hull of the problem given by constraints~\eqref{eq:pboundup}-\eqref{eq:u} and~\eqref{eq:commitment}-\eqref{eq:w}.
\end{theorem}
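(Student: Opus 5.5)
The plan is to reduce the statement to Lemma~\ref{thm:thelemma}. First I would show that the polytope $Q$ in the $(u,v,w)$-space is integral, where $Q$ is given by $0\le u_t\le 1$ for $t\in\mathcal{T}$, by~\eqref{eq:commitment}--\eqref{eq:wcommitment}, and by $v_t,w_t\ge 0$ for $t\in\Tzero$. Then I would add the production variables $p_t$ through the bounds $\Plow u_t\le p_t\le \Pup u_t$. These have exactly the form $Cx\le y\le Dx$, with $x=(u,v,w)$ and $y=p$. The compatibility condition $\Plow u_t\le \Pup u_t$ holds on $Q$ because $u_t\ge 0$ and $\Plow\le\Pup$.

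By Lemma~\ref{thm:thelemma}, every extreme point of the polytope in the statement then has integer $(u,v,w)$. Every such point with integer $u,v,w$ satisfies~\eqref{eq:u},~\eqref{eq:v},~\eqref{eq:w} and all other constraints, so it is feasible for the MIP. Hence the described polytope equals the convex hull of the MIP-feasible points. The reverse inclusion is immediate, since the listed constraints are valid for every MIP solution.

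Two remarks tidy up the bounds on $u$ and $v,w$:
\begin{itemize}
\item The statement does not list $u_t\le 1$ or $u_t\ge 0$ explicitly. For $t\in\Tzero$ they follow from $0\le v_t\le u_t$ and $0\le w_t\le 1-u_t$. For $t=\tstart$ I would use $u_{\tstart}=u_{\tstart+1}-v_{\tstart+1}+w_{\tstart+1}$ together with the bounds at $\tstart+1$. Alternatively, if that proves awkward, I would read the convex hull claim as including $0\le u_{\tstart}\le 1$, as in Section~\ref{sec:generationlimits}. Nonnegativity of $u$ is in any case also implied by~\eqref{eq:pboundup} and~\eqref{eq:pboundslow}.
\item The bounds $v_t\le 1$ and $w_t\le 1$ follow from $v_t\le u_t\le 1$ and $w_t\le 1-u_t\le 1$.
\end{itemize}

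The main step is the integrality of $Q$. I would prove it by total unimodularity. Use~\eqref{eq:commitment} to eliminate $w_t=v_t-u_t+u_{t-1}$; since this is a unimodular change of variables (a projection along an equality), integrality is preserved. This gives the 2bin system in $(u,v)$:
\begin{itemize}
\item $0\le u_t\le 1$,
\item $v_t\le u_t$,
\item $v_t\le 1-u_{t-1}$,
\item $v_t\ge u_t-u_{t-1}$.
\end{itemize}
Substituting $z_t=u_t-u_{t-1}$ is awkward, so instead I would order the variables as $u_{\tstart},v_{\tstart+1},u_{\tstart+1},v_{\tstart+2},\dots$ and check the rows. After scaling, every row is a consecutive-ones (interval) row in this ordering except the row $v_t-u_t+u_{t-1}\ge 0$, which has entries $+1$ on $u_{t-1}$, $+1$ on $v_t$ and $-1$ on $u_t$.

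Rather than fight this directly, I would use a cleaner alternative: show that every vertex of $Q$ is integral by a direct argument. Take a fractional feasible point. Let $S$ be a maximal block of consecutive periods on which the $u_t$ are fractional. On $S$, perturb $u_t$ by $\pm\epsilon$ uniformly across the block. On the boundary periods, adjust $v$ and $w$ by the same $\pm\epsilon$ to keep~\eqref{eq:commitment}, or keep $v,w$ fixed inside the block; in either case all constraints stay satisfied. This writes the point as a midpoint of two feasible points, so it is not a vertex.

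The case I expect to be the main obstacle is $u$ integral but $v$ and $w$ fractional. This case is excluded because integral $u$ forces $v_t-w_t\in\{-1,0,1\}$ together with $v_t\le u_t$ and $w_t\le 1-u_t$. If $u_{t-1}=u_t=0$, then $v_t\le 0$; if $u_{t-1}=u_t=1$, then $w_t\le 0$; and $v_t=w_t$ in both cases, so both vanish. If $u_t\ne u_{t-1}$, then one of $v_t,w_t$ is forced to $0$ and the other to $1$. So $(v,w)$ is uniquely determined and integral. Combining the perturbation argument with this observation gives integrality of $Q$, and Lemma~\ref{thm:thelemma} finishes the proof.
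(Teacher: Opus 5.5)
Your reduction is the paper's: show the $(u,v,w)$-polytope $Q$ is integral, then apply Lemma~\ref{thm:thelemma} with $C=\Plow$ and $D=\Pup$. The paper does not prove the integrality of $Q$; it cites Rajan and Takriti with unit minimum up/down times. The gap is in your self-contained proof of that step. The uniform block perturbation fails.

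Take two periods and $(u_{t-1},u_t,v_t,w_t)=(\tfrac12,\tfrac12,\tfrac12,\tfrac12)$. This point is feasible, both $u$'s form one fractional block, and it is not a vertex: it is the midpoint of $(0,1,1,0)$ and $(1,0,0,1)$. Shift both $u$'s by $\delta\epsilon$. Then $u_t-u_{t-1}$ is unchanged, so \eqref{eq:commitment} forces $v_t$ and $w_t$ to move by a common $\gamma\epsilon$. Because $v_t=u_t$ is tight, feasibility in both directions forces $\gamma=\delta$. Because $w_t=1-u_t$ is tight, it forces $\gamma=-\delta$. Hence $\delta=0$.

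The option of keeping $v,w$ fixed inside the block fails even earlier. For example, $(0.3,0.4,0.4,0.3)$ violates $v_t\le u_t$ in the $-\epsilon$ direction. With $|\mathcal{T}|$ periods and all entries equal to $\tfrac12$, the only decomposition is into the two alternating on/off schedules. So any valid perturbation must alternate in sign inside a fractional block. Your claim that ``in either case all constraints stay satisfied'' is therefore false. The case you flagged as the main obstacle (integral $u$, fractional $v,w$) is actually the easy one, and your treatment of it is correct.

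There are two ways to repair the step.
\begin{itemize}
\item Keep the perturbation idea, but let the sign $\delta_t\in\{\pm1\}$ of the shift of $u_t$ flip exactly at transitions where $v_t=u_t$ and $w_t=1-u_t$ hold simultaneously. There, move $v_t$ by $\delta_t\epsilon$ and $w_t$ by $\delta_{t-1}\epsilon$. At every other transition inside the block, keep the sign and move $v_t,w_t$ jointly by $+\delta$, $-\delta$, or $0$, according to which upper bound (if any) is tight. One checks that no other pair of bounds on $(v_t,w_t)$ can be tight at the same time when $u_{t-1},u_t$ are fractional.
\item More cleanly, use a flow argument. Set $x^{11}_t=u_t-v_t$, $x^{01}_t=v_t$, $x^{10}_t=w_t$ and $x^{00}_t=1-u_t-w_t$. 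These are arc flows of one unit through a layered digraph with an `on' and an `off' node per period. Constraints \eqref{eq:commitment}--\eqref{eq:wcommitment} together with $v_t,w_t\ge0$ are exactly flow conservation and nonnegativity. The node-arc incidence matrix is totally unimodular, and the change of variables is integral in both directions, so $Q$ is integral.
\end{itemize}
Your abandoned total-unimodularity attempt also misclassified rows: in your ordering, $v_t-u_t\le 0$ is not an interval row either. The flow model is the right way to obtain total unimodularity here.
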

\begin{proof}
    It was proven by Rajan and Takriti~\cite{rajanMinimumPolytopesUnit2005} that constraints~\eqref{eq:commitment}-\eqref{eq:wcommitment} and $v_{t},\ w_{t}\geq 0$ completely describe the convex hull of the IP problem given by constraints~\eqref{eq:commitment}-\eqref{eq:w} and~\eqref{eq:u}. 
    They actually proved this for a more complex version of the model where minimum up and down times are considered, but the proof holds here if the minimum up/down time is 1. 
    When adding a continuous variable $p_{t}$, then the binary variables are still binary in all extreme points of the LP relaxation of the new MIP by Lemma~\ref{thm:thelemma}. Thus the theorem holds.
\end{proof}

As explained in Section~\ref{sec:3bin}, relaxing the integrality constraint on the start-up and shut-down variables does not change the problem. The second way that this problem is commonly modeled takes advantage of this~\cite{nowakStochasticLagrangianRelaxation2000,carrionComputationallyEfficientMixedinteger2006}. Instead of introducing the variables $v_{gt}$ and $w_{gt}$, Nowak and Roemisch~\cite{nowakStochasticLagrangianRelaxation2000} introduced an auxiliary continuous variable $\csu_{gt}$, which represents the costs associated with starting up unit $g$ in time period $t$. Similarly, we can introduce an auxiliary continuous variable $\csd_{gt}$ to represent the shutdown costs. Essentially, these variables represent the value of $\SUC_g v_{gt}$ and $\SDC_g w_{gt}$, respectively. The \hlblue{function in the} objective that minimizes production costs, non-load costs, and start-up and shut-down costs can then be written as:

\begin{align*}
    f^\mathrm{p}(p_{gt},u_{gt},v_{gt},w_{gt})= & \sum_{g\in\mathcal{G}} \sum_{t\in\mathcal{T}} \left( \Cp_g p_{gt} + \Cu_g u_{gt} + \csu_{gt} + \csd_{gt} \right). 
\end{align*}

The following constraints are included to ensure that the cost variables attain the correct values: 

\begin{subequations}
\begin{align*}
    &\csu_{gt} \geq \SUC_g(u_{gt} - u_{g,t-1}) \qquad&\forall g\in \mathcal{G},t\in \Tzero,\\
    &\csd_{gt} \geq \SDC_g(u_{g,t-1} - u_{gt}) \qquad&\forall g\in \mathcal{G},t\in \Tzero,\\
    &\csu_{gt},\ \csd_{gt} \in \mathbb{R}_{\geq0} \qquad&\forall g\in \mathcal{G},t\in \mathcal{T}.
\end{align*}
\end{subequations}

Together with constraints~\eqref{eq:pboundup}-\eqref{eq:u}, a MILP formulation is obtained that also accurately models and minimizes start-up and shut-down costs. 
To be able to compare this formulation to \hyperref[eq:3binucmodel]{Model II}, we first rewrite it such that it contains the $v_{t}$ and $w_{gt}$ variables, instead of the $\csu_{gt}$ and $\csd_{gt}$ variables. By substituting $\csu_{gt} = \SUC_g v_{gt}$ and $\csd_{gt} = \SDC_g w_{gt}$, and dropping the $g$ subscript, we obtain\hlorange{ }\hyperref[eq:3binucmodelcompact]{Model II-S}, which is almost the same as the formulation \hlorange{of} Muckstadt and Koenig~\cite{muckstadtApplicationLagrangianRelaxation1977} \hlorange{from} 1977. 


\noindent\hrulefill \\
\textbf{Model II-S}: Generation limits and start-up \& shut-down costs \\
\vspace{-5pt}
\noindent\hrule
\vspace{-5pt}
\begin{subequations}
\label{eq:3binucmodelcompact}
\begin{align}
    \quad &\eqref{eq:pboundup}-\eqref{eq:u},\eqref{eq:v},\eqref{eq:w} \hspace{-100pt}\notag\\
    &v_{t} \geq u_{t} - u_{t-1} \qquad&\forall t\in \Tzero \label{eq:vlowerbound2}\\
    &w_{t} \geq u_{t-1} - u_{t} \qquad&\forall t\in \Tzero \label{eq:wlowerbound}
\end{align}
\end{subequations}
\vspace{-10pt}
\noindent\hrule 
\vspace{10pt} 

It also holds for \hyperref[eq:3binucmodelcompact]{Model II-S} that relaxing the integrality constraint on the start-up and shut-down variables does not change the problem~\cite{muckstadtApplicationLagrangianRelaxation1977}. We observe that \hyperref[eq:3binucmodelcompact]{Model II-S} contains fewer constraints than \hyperref[eq:3binucmodel]{Model II}. We show in Theorem~\ref{obs:lbCH} that the solution space of the LP relaxations of \hyperref[eq:3binucmodel]{Model II} and \hyperref[eq:3binucmodelcompact]{Model II-S} are exactly the same in the direction of the objective function~\eqref{eq:objsusdcosts}:
\begin{align}
    \min\quad f^\mathrm{inv}(\uinv)+\sum_{t\in\mathcal{T}} \left(\Cp p_{t} + \Cu u_{t} + \SUC v_t + \SDC w_t \right).\label{eq:objsusdcosts} 
\end{align}
To this end, we first inspect \hyperref[eq:3binucmodel]{Model II} without constraints~\eqref{eq:vcommitment} and~\eqref{eq:wcommitment}, which is how Garver~\cite{garverPowerGenerationScheduling1962} originally modeled the UC problem in 1962. In Lemma~\ref{obs:noupperbounds}, we show  that upper bounds~\eqref{eq:vcommitment} and~\eqref{eq:wcommitment} in \hyperref[eq:3binucmodel]{Model II} on the start-up and shut-down variables $v_{t}$ and $w_{t}$ \hlorange{are redundant when the} objective function is~\eqref{eq:objsusdcosts}, which minimizes the start-up and shut-down costs. 

\begin{lemma}
    \hlblue{If the objective function is~\eqref{eq:objsusdcosts} and} \hlorange{$\SUC,\SDC>0$, then constraints~\eqref{eq:vcommitment} and~\eqref{eq:wcommitment} are redundant in the LP relaxation of \mbox{\hyperref[eq:3binucmodel]{Model II}}.}
    \label{obs:noupperbounds} 
\end{lemma}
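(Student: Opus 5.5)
We will argue by an exchange argument on optimal LP solutions. Here ``redundant'' means that dropping~\eqref{eq:vcommitment} and~\eqref{eq:wcommitment} from the LP relaxation of \hyperref[eq:3binucmodel]{Model II} does not change its optimal value, and every optimal solution of the relaxed LP already satisfies them. Let $(p,u,v,w)$ be any feasible solution of the LP relaxation with~\eqref{eq:vcommitment} and~\eqref{eq:wcommitment} removed, i.e.\ satisfying~\eqref{eq:pboundup},~\eqref{eq:pboundslow}, $u_t\leq 1$,~\eqref{eq:commitment} and $v_t,w_t\geq 0$. We show that if~\eqref{eq:vcommitment} or~\eqref{eq:wcommitment} is violated for some $t$, then the solution cannot be optimal for~\eqref{eq:objsusdcosts}. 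Since the feasible set only grows when constraints are removed, this gives the claim.

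\textbf{Key step.} Fix a period $t\in\Tzero$ and let $\delta=\min\{v_t,w_t\}$. Consider the modification $v_t'=v_t-\delta$, $w_t'=w_t-\delta$, with all other variables unchanged. It preserves~\eqref{eq:commitment}, because only the difference $v_t-w_t$ enters there. It also keeps $v_t',w_t'\geq 0$. The objective changes by $-(\SUC+\SDC)\delta$, which is strictly negative whenever $\delta>0$, because $\SUC,\SDC>0$. Hence every optimal solution satisfies $\min\{v_t,w_t\}=0$ for every $t$.

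\textbf{Recovering the bounds.} Take an optimal solution and a period $t$.
\begin{itemize}
\item If $w_t=0$, then $v_t=u_t-u_{t-1}\leq u_t$ because $u_{t-1}\geq 0$, so~\eqref{eq:vcommitment} holds. Also $w_t=0\leq 1-u_t$, so~\eqref{eq:wcommitment} holds.
\item If $v_t=0$, then $w_t=u_{t-1}-u_t\leq 1-u_t$ because $u_{t-1}\leq 1$, so~\eqref{eq:wcommitment} holds, and $v_t=0\leq u_t$, so~\eqref{eq:vcommitment} holds.
\end{itemize}
Here we use $0\leq u_t\leq 1$. In the LP relaxation, $u_t\geq 0$ is implied by~\eqref{eq:pboundup} and~\eqref{eq:pboundslow}, as noted in Section~\ref{sec:generationlimits}.

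\textbf{Conclusion.} Every optimal solution of the relaxed LP is feasible for the LP relaxation of \hyperref[eq:3binucmodel]{Model II}. The two LPs therefore have the same optimal value and the same optimal solutions.

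\textbf{Investment case.} With the investment variant, $w_t\leq \uinv-u_t$, the same argument applies with $u_{t-1}\leq\uinv$ in place of $u_{t-1}\leq 1$. The $f^\mathrm{inv}(\uinv)$ term is untouched by the modification, since it changes only $v_t$ and $w_t$.

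The main subtlety is interpretive rather than technical. One must state precisely that ``redundant'' means the constraints are satisfied by all optimal solutions, not that they are implied for every feasible point. The strict positivity of the costs is exactly what rules out alternative optima that violate the bounds.
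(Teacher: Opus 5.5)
Your proof is correct and uses the paper's exchange argument: the paper takes a point violating \eqref{eq:vcommitment} or \eqref{eq:wcommitment} and uses \eqref{eq:commitment} with $0\le u\le 1$ to show that $v_t$ and $w_t$ are then both strictly positive. It then lowers both by $\epsilon$ to cut the objective by $(\SUC+\SDC)\epsilon$. You run the same exchange in contrapositive form (optimality forces $\min\{v_t,w_t\}=0$, which then gives both bounds), so you handle the two constraints together where the paper leaves the second case as ``similar''.
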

\begin{proof}
    \hlorange{Let $\SUC,\SDC>0$. We define $P$ as the set of solutions to the LP relaxation of \mbox{\hyperref[eq:3binucmodel]{Model II}}, and $Q$ as the set of solutions to the LP relaxation of \mbox{\hyperref[eq:3binucmodel]{Model II}} without constraints~\eqref{eq:vcommitment} and~\eqref{eq:wcommitment}. By definition, we know that $P\subset Q$. We look into all points in $Q\setminus P$, so all solutions that satisfy the constraints of the LP relaxation of \mbox{\hyperref[eq:3binucmodel]{Model II}}, but violate constraint~\eqref{eq:vcommitment} or~\eqref{eq:wcommitment}, and show that these solutions will not be attained in the direction of the objective function~\eqref{eq:objsusdcosts}. To this end, we consider a solution $x'\in Q\setminus P$, so a solution in $Q$ that satisfies relaxed \mbox{constraints~\eqref{eq:pboundup}-\eqref{eq:u}},\eqref{eq:commitment},\eqref{eq:v}, and~\eqref{eq:w}, but violates constraint~\eqref{eq:vcommitment} or~\eqref{eq:wcommitment}. So $v_{t'}> u_{t'}$ or $w_{t'} > 1-u_{t'}$ for some $t'\in\mathcal{T}$ in the solution $x'$.}

    \hlorange{We first consider the case that} $v_{t'}> u_{t'}$, so $v_{t'} - u_{t'}>0$, then it follows from equality constraint~\eqref{eq:commitment} that $v_{t'} - u_{t'} = w_{t'} - u_{t'-1}>0$, so $w_{t'} > u_{t'-1}$. We know that $u_{t'},u_{t'-1}\geq0$ by constraint~\eqref{eq:u}, so it follows that $v_{t'}> u_{t'} \geq0$ and $w_{t'} > u_{t'-1} \geq 0$. That means that we can reduce the values of variables $v_{t'}$ and $w_{t'}$ by some positive value $\epsilon$ to obtain some other \hlgreen{solution $x''$}. This solution still satisfies relaxed constraints~\eqref{eq:v} and~\eqref{eq:w} and equality constraint~\eqref{eq:commitment}, so \hlgreen{$x''$} is a \hlorange{feasible solution to \mbox{\hyperref[eq:3binucmodel]{Model II}}}. The objective function value of \hlgreen{solution $x''$} is changed by $\SUC\epsilon + \SDC \epsilon$ compared to the objective function value of \hlgreen{solution $x'$}, which is negative if $\SUC,\SDC>0$. Thus, when constraint~\eqref{eq:vcommitment} is violated by a \hlgreen{solution $x'$}, we can always obtain a different \hlorange{solution} in $Q$ with a lower objective function value. 
    
    A similar argument holds for a \hlorange{solution} $x'\in Q$ that violates constraint~\eqref{eq:wcommitment}. Thus, \hlorange{constraints~\eqref{eq:vcommitment} and~\eqref{eq:wcommitment} are redundant in the LP relaxation of \mbox{\hyperref[eq:3binucmodel]{Model II}}}. 
\end{proof}





\hlorange{Next, we show in Theorem~\ref{obs:lbCH} that all optimal solutions to }\hlgreen{the LP relaxation of}\hlorange{ \mbox{\hyperref[eq:3binucmodelcompact]{Model II-S}}, which contains the lower bounds~\eqref{eq:vlowerbound2} and~\eqref{eq:wlowerbound}, actually satisfy the equality constraint~\eqref{eq:commitment} of \mbox{\hyperref[eq:3binucmodel]{Model II}}.}

\begin{theorem}
    \hlblue{If the objective function is~\eqref{eq:objsusdcosts} and} \hlorange{$\SUC,\SDC>0$, then all optimal solutions to the LP relaxation of \mbox{\hyperref[eq:3binucmodelcompact]{Model II-S}} satisfy the equality constraint~\eqref{eq:commitment}.}
    \label{obs:lbCH}
\end{theorem}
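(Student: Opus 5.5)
We argue by contradiction with a local improvement, in the same spirit as the proof of Lemma~\ref{obs:noupperbounds}. Take an optimal solution $x'=(p,u,v,w)$ of the LP relaxation of \hyperref[eq:3binucmodelcompact]{Model II-S} under objective~\eqref{eq:objsusdcosts}. Suppose that for some $t'\in\Tzero$ the equality~\eqref{eq:commitment} fails, i.e.\ $v_{t'}-w_{t'}\neq u_{t'}-u_{t'-1}$. We will build a feasible solution with strictly smaller objective value. The key observation is that the variables $v_{t'}$ and $w_{t'}$ appear only in the constraints $v_{t'}\geq u_{t'}-u_{t'-1}$, $w_{t'}\geq u_{t'-1}-u_{t'}$, the relaxed bounds $0\leq v_{t'},w_{t'}\leq 1$, and the objective, with coefficients $\SUC,\SDC>0$. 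The investment term $f^\mathrm{inv}(\uinv)$ and all $p,u$ are left untouched, so only these two variables matter.

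We split into cases according to the sign of $\delta:=u_{t'}-u_{t'-1}\in[-1,1]$.

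\emph{Case $\delta\geq 0$.} Constraint~\eqref{eq:wlowerbound} is then implied by $w_{t'}\geq 0$, and~\eqref{eq:vlowerbound2} reads $v_{t'}\geq\delta$. At an optimum we must have $v_{t'}=\delta$ and $w_{t'}=0$. Otherwise, decreasing whichever of the two exceeds its lower bound (by the slack, which is positive) keeps all constraints satisfied. In particular it respects the upper bounds of~1, since we only decrease, and $\delta\leq 1$ ensures the target values lie in $[0,1]$. Because its cost coefficient is positive, this strictly lowers the objective, contradicting optimality. Hence $v_{t'}-w_{t'}=\delta$.

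\emph{Case $\delta<0$.} This is symmetric: the optimum forces $w_{t'}=-\delta$ and $v_{t'}=0$, so again $v_{t'}-w_{t'}=\delta$.

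Since every $t'$ falls into one of the two cases, every optimal solution satisfies~\eqref{eq:commitment}. We can also conclude, for free, that $v_t=\max(0,u_t-u_{t-1})$ and $w_t=\max(0,u_{t-1}-u_t)$. Hence~\eqref{eq:vcommitment} and~\eqref{eq:wcommitment} hold as well, consistent with Lemma~\ref{obs:noupperbounds}.

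The only delicate point is making sure the decrease is feasible, namely that the lower bound we reduce to is nonnegative. This holds because $\max(0,\pm\delta)\geq 0$, and the decreased value stays within $[0,1]$. The strict positivity $\SUC,\SDC>0$ is exactly what is needed: with zero cost, non-tight values could remain optimal, and the statement would fail.
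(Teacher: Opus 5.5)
Your proposal is correct and follows essentially the same route as the paper: split on the sign of $u_{t}-u_{t-1}$, then use $\SUC,\SDC>0$ to force one of $v_t,w_t$ onto its lower bound and the other to $0$, which yields~\eqref{eq:commitment}. You additionally make explicit why the decrease is feasible and that~\eqref{eq:vcommitment} and~\eqref{eq:wcommitment} follow, both of which the paper leaves implicit.
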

\begin{proof}

    \hlorange{We consider two cases: $u_t - u_{t-1}\geq0$ in an optimal solution to the LP relaxation of \mbox{\hyperref[eq:3binucmodelcompact]{Model II-S}}, or $u_t - u_{t-1}\leq0$.}

    \hlorange{Suppose $u_t - u_{t-1}\geq0$ in an optimal solution to the LP relaxation of \mbox{\hyperref[eq:3binucmodelcompact]{Model II-S}}. Then it follows from constraint~\eqref{eq:vlowerbound2} that $v_t \geq u_t - u_{t-1}\geq0$, and it follows from constraint~\eqref{eq:wlowerbound} that $w_t \geq 0 \geq u_{t-1} - u_t$. If $\SUC,\SDC>0$, then the objective of \mbox{\hyperref[eq:3binucmodelcompact]{Model II-S}} minimizes the variables $v_t$ and $w_t$, so }\hlgreen{it follows from these inequalities that}\hlorange{ $v_t=u_t-u_{t-1}$ and $w_t=0$ in the optimal solution to the relaxation. These variable values satisfy the equality constraint~\eqref{eq:commitment}: $u_t - u_{t-1} = v_t - w_t$.}

    \hlorange{A similar argument follows for the case that $u_t - u_{t-1}\leq0$ in an optimal solution to the LP relaxation of \mbox{\hyperref[eq:3binucmodelcompact]{Model II-S}}. Thus, if $\SUC,\SDC>0$, then all optimal solutions to the LP relaxation of \mbox{\hyperref[eq:3binucmodelcompact]{Model II-S}} satisfy the equality constraint~\eqref{eq:commitment}.}
\end{proof}


\hlorange{Thus, all models presented in this section model the same problem, and the set of optimal solutions to their LP relaxations is the same. }\hlgreen{Therefore}\hlorange{, their tightness in the direction of the objective function~\eqref{eq:objsusdcosts} is the same. }\hlgreen{Notice that in the proof of Theorem~\ref{obs:lbCH}, we never use the upper bound of 1 for $v_t$ and $w_t$. Thus, the LP relaxation of \mbox{\hyperref[eq:3binucmodelcompact]{Model II-S}} without these upper bounds }\hlgreen{defines}\hlorange{ an unbounded convex hull}\hlgreen{ of the problem}. Constraint $u_{t}\leq 1$ is needed in the description \hlorange{of} its convex hull, since other constraints no longer imply it\hlorange{, whereas it is not needed in the description of the convex hull of \mbox{\hyperref[eq:3binucmodel]{Model II}} (shown in Theorem~\ref{obs:3binCH})}. The difference between \hlorange{\mbox{\hyperref[eq:3binucmodel]{Model II}} (without constraints~\eqref{eq:vcommitment} and~\eqref{eq:wcommitment}) and \mbox{\hyperref[eq:3binucmodelcompact]{Model II-S}}} is that the first has equality constraint~\eqref{eq:commitment}, whereas the second has two inequalities instead. We do not know a priori how these models compare computationally.

\subsubsection{Minimum up and down times}
\label{sec:minupdowntime}

In this section, we explain how minimum up and down time constraints can best be included in UC formulations, and why we do not consider 1bin formulations for this property. It was proven by Rajan and Takriti~\cite{rajanMinimumPolytopesUnit2005} that the size of the convex hull of the UC problem including minimum up and down time constraints is linear with regard to the number of time periods, when the problem is formulated with two sets of binary variables (the commitment variable and the start-up variable), so as a 2bin model. A more recent proof by Queyranne and Wolsey~\cite{queyranneTightMIPFormulations2017} extended this result to minimum up and down times that vary across the planning horizon, as well as maximum up and down times, but this is outside our scope. When modeling minimum up and down times using only the commitment variables (1bin), Lee et al.~\cite{leeMinminpolytopes2004} proved that the convex hull is exponential in the space of the binary variables. Thus, we only consider the tight formulation for minimum up and down times found by Rajan and Takriti~\cite{rajanMinimumPolytopesUnit2005}. 

We introduce \hyperref[eq:3binucmodele]{Model II-E}, where constraint~\eqref{eq:minimumuptime} models the minimum up time $\mut$, and constraint~\eqref{eq:minimumdowntime3bin} models the minimum down time $\mdt$.\footnote{
\hlblue{When considering an investment problem, the right-hand side of constraint~\eqref{eq:minimumdowntime3bin} should be replaced by $\uinv - u_t$. We elaborate on theoretical results for this extension to investment problems in Section~\ref{sec:inv}.}} 
The objective \hlblue{usually includes start-up and shut-down costs: $f^\mathrm{p}(p_{t},u_{t},v_t,w_t)=\sum_{t\in\mathcal{T}} \left( \Cp p_t + \Cu u_{t} + \SUC_g  v_{t} + \SDC_g  w_{t} \right)$}. \hlgreen{In the remainder of this paper, we assume that all costs are nonnegative.} Both were proven to be \hlgreen{facet defining for} the multi-period minimum up/down time UC problem by Rajan and Takriti~\cite{rajanMinimumPolytopesUnit2005}. They also proved that these constraints, together with equality constraint~\eqref{eq:commitment} and $v_{t},w_{t}\geq 0$ describe the convex hull of the problem for one unit. It then follows from Lemma~\ref{thm:thelemma} that adding the $p_{t}$ variables with constraints~\eqref{eq:pboundup} and~\eqref{eq:pboundslow} still results in a convex hull, so \hyperref[eq:3binucmodele]{Model II-E} is the tightest possible formulation for this problem.

\noindent\hrulefill \\
\textbf{Model II-E}: Generation limits and minimum up \& down times \\
\vspace{-5pt}
\noindent\hrule
\vspace{-5pt}
\begin{subequations}
\label{eq:3binucmodele}
\begin{align}
    &\eqref{eq:pboundup}-\eqref{eq:u},\eqref{eq:commitment},\eqref{eq:v},\eqref{eq:w}\hspace{-100pt}\notag\\
    &\sum_{i=t-\mut+1}^t v_{i} \leq u_{t} \quad&\forall t\in \Tmut \label{eq:minimumuptime}\\
    &\sum_{i=t-\mdt+1}^t w_{i} \leq 1 - u_{t} \quad&\forall t\in \Tmdt \label{eq:minimumdowntime3bin}
\end{align}
\end{subequations}
\vspace{-5pt}
\noindent\hrule 
\vspace{10pt}

\subsubsection{Ramping and start-up and shut-down capabilities (3bin)}
\label{sec:susdcap3bin}

Intuitively, the simplest constraints that describe these capabilities would be the following, first introduced by Arroyo and Conejo~\cite{arroyoOptimalResponseThermal2000}:
\begin{align*}
    p_{t}-p_{t-1} &\leq \RU u_{t-1} + \SU v_{t}\qquad &\forall t\in\Tzero,\\
    p_{t-1}-p_{t} &\leq \RD_g u_{t} + \SD w_{t}\qquad &\forall t\in\Tzero.
\end{align*}
However, much research has been done to find tighter constraints for the problem. Ostrowski et al.~\cite{ostrowskiTightMixedInteger2012} presented several facets for the problem in three time periods. Thereafter, Damc\i-Kurt et al.~\cite{damci-kurtPolyhedralStudyProduction2016} presented facets for the problem in two time periods, which contain fewer non-zero entries than those by Ostrowski et al. They proved that~\eqref{eq:su3bintight} is a facet of the ramping up polytope, and that~\eqref{eq:sd3bintight} is a facet of the ramping down polytope. 


\noindent\hrulefill \\
\textbf{Model II-E2}: Generation limits, ramping, start-up \& shut-down costs and capabilities, and minimum up \& down times, \\
\hlgreen{where $[\cdot]^+:=\max\{\cdot,0\}$.} \\
\vspace{-5pt}
\noindent\hrule
\vspace{-5pt}
\begin{subequations}
\label{eq:3binucmodele2}
\begin{align}
    &\eqref{eq:pboundslow}-\eqref{eq:u},\eqref{eq:commitment},\eqref{eq:v},\eqref{eq:w},\eqref{eq:minimumuptime},\eqref{eq:minimumdowntime3bin} \hspace{-100pt}\notag \\
    \begin{split}
        &p_{t} - p_{t-1} \leq (\SU - \Plow - \RU)v_{t} + (\Plow + \RU)u_{t} \\
        &\hspace{55pt}- \Plow u_{t-1} \hspace{90pt} \forall t\in \Tzero 
    \end{split}\label{eq:su3bintight}\\
    \begin{split}
        &p_{t-1} - p_{t} \leq (\SD - \Plow - \RD)w_{t} + (\Plow + \RD)u_{t-1} \\
        &\hspace{55pt}- \Plow u_{t} \hspace{96pt} \forall t\in \Tzero
    \end{split}
     \label{eq:sd3bintight} \\
    \begin{split}
        &p_{t} \leq \Pup u_{t} - (\Pup-\SU)v_{t} - (\Pup-\SD)w_{t+1} \\
        &\hspace{82pt}\quad\forall g\in \mathcal{G}\setminus\mathcal{G}^1,t\in \mathcal{T}\setminus \{\tstart,\tend\}
    \end{split}
     \label{eq:supbound3bintight} \\
    \begin{split}
        &p_{t} \leq \Pup u_{t} - (\Pup-\SU)v_{t} - [\SU-\SD]^+w_{t+1} \\
        &\hspace{98pt}\quad\forall g\in \mathcal{G}^1,t\in \mathcal{T}\setminus \{\tstart,\tend\}
    \end{split}
    \label{eq:supbound3bintightmultip}\\
    \begin{split}
        &p_{t} \leq \Pup u_{t} - [\SD-\SU]^+v_{t} - (\Pup-\SD)w_{t+1} \\
        &\hspace{98pt}\quad\forall g\in \mathcal{G}^1,t\in \mathcal{T}\setminus \{\tstart,\tend\}
    \end{split}
    \label{eq:sdpbound3bintightmultip}
\end{align}
\end{subequations}
\vspace{-3pt}
\noindent\hrule 
\vspace{10pt} 

Furthermore, Morales-Espa\~{n}a et al.~\cite{morales-espanaTightCompactMILP2013} introduced additional constraints~\eqref{eq:supbound3bin} and~\eqref{eq:sdplowbound3bin}. Damc\i-Kurt et al.~\cite{damci-kurtPolyhedralStudyProduction2016} later proved that these are\hlyellow{ }facets of the \hlyellow{start-}up and \hlyellow{shut-}down \hlyellow{capability} polytope, respectively.
\begin{subequations}
\begin{align}
    &p_{t} \leq \Pup u_{t} - (\Pup-\SU)v_{t} \qquad&\forall t\in \Tzero \label{eq:supbound3bin}\\
    &p_{t-1} \leq \Pup u_{t-1} - (\Pup-\SD)w_{t} \qquad&\forall t\in \Tzero \label{eq:sdplowbound3bin}
\end{align}
\end{subequations}

Morales-Espa\~{n}a et al.~\cite{morales-espanaTightCompactMILP2013} also introduced constraint~\eqref{eq:supbound3bintight}, and Gentile et al.~\cite{gentileTightMIPFormulation2017a} later proved that it \hlgreen{defines} a facet of the 3bin single-unit \textit{multi}-period \hlyellow{start-}up and \hlyellow{shut-}down \hlyellow{capability} polytope. It \hlyellow{dominates} constraints~\eqref{eq:supbound3bin} and~\eqref{eq:sdplowbound3bin}, \hlyellow{making them} redundant for $t\in \mathcal{T}\setminus \{\tstart,\tend\}$, but it is only valid for units with a minimum up time of two or higher ($\mut \geq 2$). For units with minimum up time of one ($\mut = 1$), which we denote by $\mathcal{G}^1\subseteq \mathcal{G}$, Gentile et al.~\cite{gentileTightMIPFormulation2017a} introduced constraints~\eqref{eq:supbound3bintightmultip} and~\eqref{eq:sdpbound3bintightmultip}, and proved that they are facets of the same polytope. 
Together,~\eqref{eq:supbound3bintightmultip} and~\eqref{eq:sdpbound3bintightmultip} also make~\eqref{eq:supbound3bin} and~\eqref{eq:sdplowbound3bin} redundant for $t\in \mathcal{T}\setminus \{\tstart,\tend\}$. Furthermore, constraints~\eqref{eq:supbound3bintight}-\eqref{eq:sdpbound3bintightmultip} make constraint~\eqref{eq:pboundup} redundant. Therefore, we include these constraints in our formulation.

More facets for this problem in three or more periods have been found in the literature. Pan and Guan~\cite{panPolyhedralStudyIntegrated2016} obtained the full convex hull description of the problem in three time periods. They also obtained facet-defining inequalities for the problem in more than three time periods. Knueven et al.~\cite{knuevenMixedIntegerProgramming} further extended some of this work. However, because of the large number of additional constraints that these formulations introduce, we do not consider them in this paper. Others, such as Damc\i-Kurt et al.~\cite{damci-kurtPolyhedralStudyProduction2016} and Pan and Guan~\cite{panConvexHullsUnit2017} presented valid inequalities for the multi-period problem. However, these formulations require an exponential number of constraints with respect to the number of time periods. 

Thus, we obtain the final \hyperref[eq:3binucmodele2]{Model II-E2}.\footnote{This model can be slightly adapted for specific units. If it is only desired to model ramping, but not start-up and shut-down capabilities, then these parameters can be replaced by $\SU = \Plow + \RU$ and $\SD = \Plow + \RD$. As a result, constraints~\eqref{eq:su3bintight} and~\eqref{eq:sd3bintight} become the same as the 1bin ramping constraints~\eqref{eq:rutight} and~\eqref{eq:rdtight}. 
If it is only desired to model start-up and shut-down capabilities, but not ramping, constraints~\eqref{eq:su3bintight} and~\eqref{eq:sd3bintight} can be removed.}
Damc\i-Kurt et al.~\cite{damci-kurtPolyhedralStudyProduction2016} obtained the convex hull for the two-period ramping up and start-up capability problem, and that of the ramping down and shut-down capability problem, separately. Their results do not imply that the combination of these convex hulls \hlgreen{defines} the two-period convex hull of both problems together, but in Theorem~\ref{obs:3binsusdcap} we prove that it does.

\begin{theorem}
    \label{obs:3binsusdcap}
    Constraints~\eqref{eq:pboundslow},\eqref{eq:commitment},\eqref{eq:minimumuptime},\eqref{eq:minimumdowntime3bin},\eqref{eq:su3bintight}-\eqref{eq:sdpbound3bintightmultip}, $v_{t}$ and $w_{t}\geq 0$ describe the convex hull of \hyperref[eq:3binucmodele2]{Model II-E2} for two time periods. 
\end{theorem}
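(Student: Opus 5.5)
We plan to prove Theorem~\ref{obs:3binsusdcap} by a disaggregation argument. Fix a point of the LP relaxation, split its commitment part into the integer commitment patterns, and show that its production part splits accordingly. This gives the inclusion that matters, namely that the LP relaxation is contained in the convex hull; the reverse inclusion is just validity of the listed constraints.

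\textbf{Setup.} With $\mathcal{T}=\{t_0,t_1\}$ the variables are $u_{t_0},u_{t_1},v_{t_1},w_{t_1},p_{t_0},p_{t_1}$. First we write out what the listed constraints become for two periods.
\begin{itemize}
\item The minimum up/down time constraints~\eqref{eq:minimumuptime} and~\eqref{eq:minimumdowntime3bin} reduce to (at most) $v_{t_1}\le u_{t_1}$ and $w_{t_1}\le 1-u_{t_1}$.
\item The upper-bound constraints~\eqref{eq:supbound3bintight}--\eqref{eq:sdpbound3bintightmultip} are to be read at the boundary periods. There they take the form of~\eqref{eq:supbound3bin} and~\eqref{eq:sdplowbound3bin}, which are the two-period upper bounds. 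We would state this reading explicitly, since the index set $\mathcal{T}\setminus\{t_0,t_{end}\}$ is empty for two periods.
\end{itemize}

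\textbf{Disaggregating the commitment variables.} Using~\eqref{eq:commitment}, the binary part lives in $(u_{t_0},u_{t_1},v_{t_1})$-space. Its four integer points are $00$, $01$, $10$, $11$, and they are affinely independent. By the Rajan--Takriti result invoked in Theorem~\ref{obs:3binCH}, every feasible LP point has unique nonnegative weights
\[
\lambda_{01}=v_{t_1},\quad \lambda_{10}=w_{t_1},\quad \lambda_{11}=u_{t_1}-v_{t_1},\quad \lambda_{00}=1-u_{t_0}-v_{t_1}.
\]
These sum to one, and their nonnegativity is exactly what the constraints above, together with $v_{t_1},w_{t_1}\ge 0$, imply.

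\textbf{Reducing to a Minkowski-sum statement.} For each pattern $k$ let $F_k\subset\mathbb{R}^2$ be the set of feasible $(p_{t_0},p_{t_1})$:
\begin{itemize}
\item $F_{00}=\{0\}$;
\item $F_{01}=\{0\}\times[\Plow,\SU]$;
\item $F_{10}=[\Plow,\SD]\times\{0\}$;
\item $F_{11}$ is the polygon $\Plow\le p_{t_0},p_{t_1}\le\Pup$, $p_{t_1}-p_{t_0}\le\RU$, $p_{t_0}-p_{t_1}\le\RD$.
\end{itemize}
The point lies in the convex hull if and only if $p\in\sum_k\lambda_kF_k$ (Minkowski sum). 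Indeed, then $p=\sum_k\lambda_k p^k$ with $p^k\in F_k$, and the point equals $\sum_k\lambda_k(\text{pattern}_k,p^k)$.

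\textbf{Comparing support functions.} The Minkowski sum of planar polygons is a polygon whose outer normals are contained in the union of the outer normals of the summands. Here that union is $\{\pm e_0,\pm e_1,\pm(e_1-e_0)\}$. So $\sum_k\lambda_kF_k$ is described by the six inequalities $c^\top p\le\sum_k\lambda_k h_{F_k}(c)$, one for each such $c$, where $h_{F_k}$ is the support function of $F_k$. It remains to show each of these is implied by the LP constraints.
\begin{itemize}
\item $c=e_1$: we get $p_{t_1}\le \SU\lambda_{01}+\Pup\lambda_{11}=\Pup u_{t_1}-(\Pup-\SU)v_{t_1}$, which is~\eqref{eq:supbound3bin}.
\item $c=e_0$: symmetrically we get~\eqref{eq:sdplowbound3bin}.
\item $c=e_1-e_0$: the right-hand side is $\SU v_{t_1}+\RU(u_{t_1}-v_{t_1})-\Plow w_{t_1}$. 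Substituting $v_{t_1}-w_{t_1}=u_{t_1}-u_{t_0}$ turns this into the right-hand side of~\eqref{eq:su3bintight}.
\item $c=e_0-e_1$: symmetrically we get~\eqref{eq:sd3bintight}.
\item $c=-e_0,-e_1$: these give $p_t\ge\Plow u_t$, which is~\eqref{eq:pboundslow}.
\end{itemize}

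\textbf{Main obstacle.} The step we expect to be hardest is the support-function evaluation for $F_{11}$ and $F_{01},F_{10}$ in each direction. It requires the parameter relations that make these polygons nondegenerate, namely $\Plow\le\SU\le\Plow+\RU$, $\SD\le\Plow+\RD$, and $\RU,\RD\le\Pup-\Plow$. Without them some normal may be absent, or some maximum may be attained at a vertex we did not anticipate. We would handle this by computing each support value explicitly as a maximum over the vertices of the summand, and checking that under these assumptions the maximizers are the ones used above.
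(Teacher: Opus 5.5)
Your argument is correct. It shares the paper's starting point: you decompose over the four commitment states, with weights $v_{t_1}$, $w_{t_1}$, $u_{t_1}-v_{t_1}$ and the remainder. Where it differs is in how the projection is computed.

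\textbf{How the paper proceeds.} The paper writes Balas' extended formulation, with disaggregated copies $p'_t,p'_{t-1}$ of the production in the ``stays up'' state. It then eliminates these copies by Fourier--Motzkin, pruning redundant combinations by inspection.

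\textbf{How your route differs.} Your condition $p\in\sum_k\lambda_kF_k$ is exactly the projection of that extended formulation, but you evaluate it dually. The production space is planar, and every $F_k$ is cut out by inequalities with normals in $N=\{\pm e_0,\pm e_1,\pm(e_1-e_0)\}$. So six support-function evaluations suffice, and each produces one of the listed constraints directly, with the right coefficients and no redundancy pruning.

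\textbf{Your ``main obstacle'' is smaller than you fear.} For example, $h_{F_{11}}(e_1-e_0)=\min\{\RU,\Pup-\Plow\}$. If $\RU>\Pup-\Plow$, the resulting inequality is just the sum of \eqref{eq:supbound3bin} and $p_{t_0}\ge\Plow u_{t_0}$. The same holds symmetrically for $\RD$. So apart from $\Plow\le\SU$ and $\Plow\le\SD$ (nonemptiness of $F_{01},F_{10}$) and $\RU,\RD\ge0$, no parameter relation is needed.

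\textbf{What the paper's route buys.} It is more general: Fourier--Motzkin is mechanical in any dimension. Yours relies on two features specific to two periods. First, the commitment patterns are affinely independent, so the weights are unique; this fails for three or more periods. Second, the production space is planar. In dimension three or more, facets of a Minkowski sum can have normals that are facet normals of no summand (e.g.\ from pairs of edges), so the candidate directions are no longer known a priori. The paper also asserts facetness of every constraint, which the statement does not require and which you do not address.

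\textbf{One step to tighten.} $F_{00}$ is a point, $F_{01}$ and $F_{10}$ are segments, and the weighted sum may itself be a segment. So ``the outer normals of a Minkowski sum of polygons'' is not quite the right justification. Argue instead as follows. The six directions in $N$ cut the plane into sectors of angle less than $\pi$. Each $h_{F_k}$ is linear on every such sector, because normal cones of a polyhedron are generated by the normals of its tight inequalities, all of which lie in $N$. Hence $\sum_k\lambda_kh_{F_k}$ is linear on each sector too. The six inequalities then imply $c^\top p\le\sum_k\lambda_kh_{F_k}(c)$ for every direction $c$, and these inequalities cut out exactly $\sum_k\lambda_kF_k$.
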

\begin{proof}
We use the following method to obtain the convex hull, which is similar to the method that Elgersma et al.~\cite{elgersmaTightMIPFormulations2026a} used to obtain a convex hull for a storage problem. First, we write disjunctive sets of constraints for each of the following cases for two periods: the unit remains off, the unit turns on, the unit turns off, and the unit remains on. We then obtain the convex hull of these disjunctive sets of constraints together, using the method presented by Balas~\cite{balasDisjunctiveProgrammingHierarchy1985}. Lastly, we obtain the convex hull in the dimension of the original formulation by eliminating the additional variables using the Fourier-Motzkin elimination procedure.

We consider all constraints of \hyperref[eq:3binucmodele2]{Model II-E2}, so~\eqref{eq:pboundslow}-\eqref{eq:u},~\eqref{eq:commitment},~\eqref{eq:v},~\eqref{eq:w},~\eqref{eq:minimumuptime},~\eqref{eq:minimumdowntime3bin},~\eqref{eq:su3bintight}-\eqref{eq:sdpbound3bintightmultip}. We consider these constraints for two time periods only: $t$ and $t-1$. Note that minimum up and down time constraints~\eqref{eq:minimumuptime} and~\eqref{eq:minimumdowntime3bin} then become~\eqref{eq:vcommitment} and~\eqref{eq:wcommitment}. Moreover, variables $v_{t-1}$ and $w_{t+1}$ are not defined, so constraints~\eqref{eq:supbound3bintight}-\eqref{eq:sdpbound3bintightmultip} become~\eqref{eq:supbound3bin} and~\eqref{eq:sdplowbound3bin}. We can consider these constraints in four different cases: when $(u_{t-1},u_{t})=(0,1)$, $(1,0)$, $(1,1)$, or \hlgreen{$(0,0)$}. These four cases are associated with $(v_{t},w_{t})=(1,0)$, $(0,1)$, and $(0,0)$ \hlgreen{for the latter two} by constraints~\eqref{eq:commitment}-\eqref{eq:w}. 
By substituting these values in the constraints above and removing the redundant constraints, we obtain the following sets of constraints. 

\noindent If $(u_{t-1},u_{t},v_{t},w_{t})=(0,1,1,0)$:
\begin{align*}
    p_{t-1}=0 &\text{ by }~\eqref{eq:p} \text{ and }~\eqref{eq:sdplowbound3bin}\\
    \Plow \leq p_{t} \leq \Psu &\text{ by }~\eqref{eq:pboundslow}\text{ and }\eqref{eq:supbound3bin}
\end{align*}
If $(u_{t-1},u_{t},v_{t},w_{t})=(1,0,0,1)$:
\begin{align*}
    \Plow \leq p_{t-1} \leq \Psd &\text{ by }~\eqref{eq:pboundslow}\text{ and }\eqref{eq:sdplowbound3bin}\\
    p_{t}=0 &\text{ by }~\eqref{eq:p} \text{ and }~\eqref{eq:supbound3bin}
\end{align*}
If $(u_{t-1},u_{t},v_{t},w_{t})=(1,1,0,0)$:
\begin{align*}
    \Plow \leq p_{t-1} \leq \Pup &\text{ by }~\eqref{eq:p}\text{ and }\eqref{eq:sdplowbound3bin}\\
    \Plow \leq p_{t} \leq \Pup &\text{ by }~\eqref{eq:p}\text{ and }\eqref{eq:supbound3bin}\\
    -\RD \leq p_{t} - p_{t-1} \leq \RU &\text{ by }~\eqref{eq:sd3bintight}\text{ and }\eqref{eq:su3bintight}\end{align*}
If $(u_{t-1},u_{t},v_{t},w_{t})=(0,0,0,0)$:
\begin{align*}
    p_{t-1}=0 &\text{ by }~\eqref{eq:p} \text{ and }~\eqref{eq:sdplowbound3bin}\\
    p_{t}=0 &\text{ by }~\eqref{eq:p} \text{ and }~\eqref{eq:supbound3bin}
\end{align*}
Using the method by Balas~\cite{balasDisjunctiveProgrammingHierarchy1985}, we can obtain the convex hull of these disjunctive sets of constraints together. We remove all variables that are equal to zero, so everything from the \hlgreen{last} case is removed. We rename the variables from the \hlgreen{first} set to \hlyellow{$(\bullet) ^1$}, the variables from the \hlgreen{second} set to \hlyellow{$(\bullet) ^2$}, and all from the \hlgreen{third} set to \hlyellow{$(\bullet) ^3$}. Furthermore, we multiply all parameters in the sets with $\delta^1$, $\delta^2$, and $\delta^3$, respectively. We also need to include the additional constraint $\delta^1 + \delta^2 + \delta^3 = 1$. We obtain the following set of constraints, which describe the convex hull of the disjunctive sets of constraints:
\begin{align*}
    \Plow\delta^1 &\leq p_{t}^1 \leq \Psu\delta^1\\
    \Plow\delta^2 &\leq p_{t-1}^2 \leq \Psd\delta^2\\
    \Plow\delta^3 &\leq p_{t-1}^3 \leq \Pup\delta^3\\
    \Plow\delta^3 &\leq p_{t}^3 \leq \Pup\delta^3\\
    -\RD\delta^3 &\leq p_{t}^3 - p_{t-1}^3 \leq \RU\delta^3\\
    \delta^1 &+ \delta^2 + \delta^3 = 1
\end{align*}
where $p_{t} = p_{t}^2 + p_{t}^3$ and $p_{t-1} = p_{t-1}^1 + p_{t-1}^3$. We can rewrite this problem, such that its notation looks more similar to the original formulation. We rename $\delta^1=v_t$ and $\delta^2=w_{t}$, and we introduce $u_{t}=\delta^3+v_{t}$ and $u_{t-1}=\delta^3+w_t$, so $\delta^3=u_{t}-v_{t}=u_{t-1}-w_{t}\Rightarrow u_{t}-u_{t-1}=v_{t}-w_{t}$. Furthermore, we rename $p_{t}^3=p_t'$ and $p_{t-1}^3=p_{t-1}'$ for clarity, so $p_{t}^2 = p_{t}-p_{t}'$ and $p_{t-1}^1 = p_{t-1}-p_{t-1}'$. Rewriting the constraints above in this manner gives us:
\begin{subequations}
\begin{align}
    \Plow v_{t} &\leq p_{t}-p_{t}' \leq \Psu v_{t} \label{eq:fm1}\\
    \Plow w_{t} &\leq p_{t-1}-p_{t-1}' \leq \Psd w_{t} \label{eq:fm2}\\
    \Plow (u_{t-1}-w_{t}) &\leq p_{t-1}' \leq \Pup (u_{t-1}-w_{t}) \label{eq:fm3}\\
    \Plow (u_{t}-v_{t}) &\leq p_{t}' \leq \Pup (u_{t}-v_{t}) \label{eq:fm4}\\
    -\RD (u_{t-1}-w_{t}) &\leq p_{t}' - p_{t-1}' \leq \RU (u_{t}-v_{t}) \label{eq:fm5}\\
    w_{t} &\leq 1 - u_{t} \label{eq:fm6}\\
    u_{t}-u_{t-1} &=v_{t}-w_{t} \label{eq:fmcommitment}
\end{align}
\end{subequations}
To reduce the dimensionality of the problem, we use Fourier-Motzkin elimination to remove variables $p_{t}'$ and $p_{t-1}'$. The upper and lower bounds on $p_{t}'$ are~\eqref{eq:fm1},~\eqref{eq:fm4}, and~\eqref{eq:fm5}. Combining these according to the Fourier-Motzkin procedure and removing the redundant constraints gives us the following constraints:
\begin{subequations}
\begin{align}
    v_{t} &\geq0 \label{eq:fmfinal1}\\
    v_{t} &\leq u_{t} \label{eq:fmfinal2}\\
    p_{t} &\geq \Plow u_{t} \label{eq:fmfinal3}\\
    p_{t} &\leq \Pup u_{t} - (\Pup - \Psu)v_{t} \label{eq:fmfinal4}\\
    p_{t} - p_{t-1}' &\geq \Plow v_{t} - \RD(u_{t-1}-w_{t}) \label{eq:fm8}\\
    p_{t} - p_{t-1}' &\leq \Psu v_{t} + \RU(u_{t}-v_{t}) \label{eq:fm7}
\end{align}
\end{subequations}
The lower bounds on $p_{t-1}'$ are~\eqref{eq:fm2},~\eqref{eq:fm3}, and~\eqref{eq:fm7}. The upper bounds are~\eqref{eq:fm2},~\eqref{eq:fm3}, and~\eqref{eq:fm8}. Combining these and removing the redundant constraints gives us the following constraints:
\begin{subequations}
\begin{align}
    w_{t} &\geq 0 \label{eq:fmfinal5}\\
    p_{t-1} &\geq \Plow u_{t-1} \label{eq:fmfinal6}\\
    p_{t-1} &\leq \Pup u_{t-1} - (\Pup - \Psd) w_{t} \label{eq:fmfinal7}\\
    p_{t} - p_{t-1} &\leq (\Psu - \Plow - \RU)v_{t} + (\Plow + \RU)u_{t} - \Plow u_{t-1} \label{eq:fmfinal8}\\
    p_{t-1} - p_{t} &\leq (\Psd - \Plow - \RD)w_{t} + (\Plow + \RD)u_{t-1} - \Plow u_{t} \label{eq:fmfinal9}
\end{align}
\end{subequations}
Since constraints~\eqref{eq:fm1}-\eqref{eq:fmcommitment} describe the convex hull of the sets of disjunctive constraints in a higher dimension, and its projection is also a convex hull, we find that constraints~\eqref{eq:fm6},\eqref{eq:fmcommitment},\eqref{eq:fmfinal1}-\eqref{eq:fmfinal4},\eqref{eq:fmfinal5}-\eqref{eq:fmfinal9} describe the convex hull of the sets of disjunctive constraints in the original dimension. 

\begin{table*}[h]
    \caption{Theoretical overview of 3bin UC convex hulls}
    \label{tab:convexhulls}
    \centering
    \begin{tabular}{lll}
    \hline
        Problem & Convex hull & Proven by \\
        \hline
        3bin commitment for $|\mathcal{T}|$ periods &\eqref{eq:commitment}-\eqref{eq:wcommitment}, $v_{t}, w_{t}\geq 0$ &~\cite{rajanMinimumPolytopesUnit2005}\\
        3bin UC (\hyperref[eq:3binucmodel]{\textbf{Model II}}) for $|\mathcal{T}|$ periods &\eqref{eq:pboundup},\eqref{eq:pboundslow},\eqref{eq:commitment}-\eqref{eq:wcommitment}, $v_{t}, w_{t}\geq 0$ & us: Theorem~\ref{obs:3binCH}/\cite{gentileTightMIPFormulation2017a}?\\
        3bin UC minimizing start-up/shut-down costs &\eqref{eq:pboundup},\eqref{eq:pboundslow},~\eqref{eq:commitment}, $u_{t}\leq 1, v_{t}, w_{t}\geq 0$ & us: Lemma~\ref{obs:noupperbounds}\\
        \quad (\hyperref[eq:3binucmodel]{\textbf{Model II}} without~\eqref{eq:vcommitment},\eqref{eq:wcommitment}) for $|\mathcal{T}|$ periods & & \\
        3bin UC minimizing start-up/shut-down costs &\eqref{eq:pboundup},\eqref{eq:pboundslow},~\eqref{eq:vlowerbound2},\eqref{eq:wlowerbound}, $u_{t}\leq 1$, $v_{t}, w_{t}\geq 0$ & us: Theorem~\ref{obs:lbCH}\\
        \quad (\hyperref[eq:3binucmodelcompact]{\textbf{Model II-S}}) for $|\mathcal{T}|$ periods & & \\
        3bin minimum up and down time for $|\mathcal{T}|$ periods &\eqref{eq:commitment},\eqref{eq:minimumuptime},\eqref{eq:minimumdowntime3bin}, $v_{t}, w_{t}\geq 0$ & \cite{rajanMinimumPolytopesUnit2005} \\
        3bin minimum up and down time (\hyperref[eq:3binucmodele]{\textbf{Model II-E}}) for $|\mathcal{T}|$ periods &\eqref{eq:pboundup},\eqref{eq:pboundslow},\eqref{eq:commitment},\eqref{eq:minimumuptime},\eqref{eq:minimumdowntime3bin}, $v_{t}, w_{t}\geq 0$ & us: Lemma~\ref{thm:thelemma} \\
        3bin start-up and shut-down capabilities for $|\mathcal{T}|$ periods &\eqref{eq:pboundslow},\eqref{eq:commitment},\eqref{eq:minimumuptime},\eqref{eq:minimumdowntime3bin},\eqref{eq:supbound3bintight}-\eqref{eq:sdpbound3bintightmultip}, $v_{t}, w_{t}\geq 0$ & \cite{gentileTightMIPFormulation2017a}\\
        3bin ramping up and start-up capability for $|\mathcal{T}|=2$ &\eqref{eq:pboundslow},\eqref{eq:commitment}-\eqref{eq:wcommitment},\eqref{eq:su3bintight},\eqref{eq:supbound3bin}, $v_{t}, w_{t}\geq 0$ & \cite{damci-kurtPolyhedralStudyProduction2016}\\
        3bin ramping down and shut-down capability for $|\mathcal{T}|=2$ &\eqref{eq:pboundslow},\eqref{eq:commitment}-\eqref{eq:wcommitment},\eqref{eq:sd3bintight},\eqref{eq:sdplowbound3bin}, $v_{t}, w_{t}\geq 0$ & \cite{damci-kurtPolyhedralStudyProduction2016}\\
        3bin ramping up \textit{and} down and start-up \textit{and} shut-down &\eqref{eq:pboundslow},\eqref{eq:commitment}-\eqref{eq:wcommitment},\eqref{eq:su3bintight},\eqref{eq:sd3bintight},\eqref{eq:supbound3bin},\eqref{eq:sdplowbound3bin}, $v_{t}, w_{t}\geq 0$ & us: Theorem~\ref{obs:3binsusdcap}\\
        \quad capabilities (simplification of \hyperref[eq:3binucmodele2]{\textbf{Model II-E2}}) for $|\mathcal{T}|=2$ & & \\
        3bin ramping and start-up \textit{and} shut-down &\eqref{eq:pboundslow},\eqref{eq:commitment},\eqref{eq:minimumuptime},\eqref{eq:minimumdowntime3bin},\eqref{eq:su3bintight}-\eqref{eq:sdpbound3bintightmultip}, $v_{t}, w_{t}\geq 0$ & us: Theorem~\ref{obs:3binsusdcap}\\
        \quad capabilities (\hyperref[eq:3binucmodele2]{\textbf{Model II-E2}}) for $|\mathcal{T}|=2$ & & \\
        3bin start-up and shut-down trajectory (\hyperref[eq:3binucmodele3]{\textbf{Model II-E3}})  for $|\mathcal{T}|=2$ &\eqref{eq:pboundslow},\eqref{eq:commitment},\eqref{eq:minimumuptime},\eqref{eq:minimumdowntime3bin},\eqref{eq:su3bintight}-\eqref{eq:sdpbound3bintightmultip},\eqref{eq:susdtraj}, $v_{t}, w_{t}\geq 0$ &\cite{morales-espanaTightMIPFormulations2015a}\\
        \hline
    \end{tabular}
\end{table*}

Furthermore, constraints~\eqref{eq:vcommitment} and~\eqref{eq:wcommitment} are the same as constraints~\eqref{eq:fm6} and~\eqref{eq:fmfinal2}, respectively, which are dominated by~\eqref{eq:minimumuptime} and~\eqref{eq:minimumdowntime3bin}. Constraints~\eqref{eq:commitment} is the same as~\eqref{eq:fmcommitment}. Constraints~\eqref{eq:fmfinal3} and~\eqref{eq:fmfinal6} are the same as~\eqref{eq:pboundslow}.
Constraints~\eqref{eq:supbound3bin} and~\eqref{eq:sdplowbound3bin} are the same as~\eqref{eq:fmfinal4} and~\eqref{eq:fmfinal7}, which are dominated by~\eqref{eq:supbound3bintight}-\eqref{eq:sdpbound3bintightmultip}. Lastly, constraints~\eqref{eq:su3bintight} and~\eqref{eq:sd3bintight} are the same as~\eqref{eq:fmfinal9} and~\eqref{eq:fmfinal8}.
In conclusion, constraints~\eqref{eq:pboundslow},\eqref{eq:commitment},\eqref{eq:minimumuptime},\eqref{eq:minimumdowntime3bin},\eqref{eq:su3bintight}-\eqref{eq:sdpbound3bintightmultip}, $v_{t}$ and $w_{t}\geq 0$ describe the convex hull of \hyperref[eq:3binucmodele2]{Model II-E2} for two time periods. We have checked all combinations, and none of the constraints are redundant, so they are all facets.
\end{proof}

It follows from Theorem~\ref{obs:3binsusdcap} that \hyperref[eq:3binucmodele2]{Model II-E2} is the tightest formulation of the problem for two periods. Moreover, some of the two-period facets are replaced by dominant three-period facets in \hyperref[eq:3binucmodele2]{Model II-E2}. 


\subsubsection{Start-up and shut-down trajectories}
\label{sec:susdtrajectories}
\hlyellow{Slow startup units take more than one period to reach their minimum output. As explained in Section~\ref{sec:notation}, they produce a predefined trajectory, which is often ignored in UC formulations\cite{morales-espanaTightCompactMILP2013a}.}
Morales-Espa\~na et al.~\cite{morales-espanaTightCompactMILP2013a} presented a formulation to model start-up and shut-down trajectories. For readability (though not strictly necessary), \hlyellow{additional variables $p^\mathrm{traj}_t$ and $p^\mathrm{T}_t$ are introduced, which represent the additional output during these trajectories, and the total power output of a generator, respectively.}
\hlyellow{Their values are}\hlgreen{ determined by }\hlyellow{constraints~\eqref{eq:ptraj} and~\eqref{eq:susdtraj}, and the new variable $p^\mathrm{T}_t$ is included }\hlgreen{in }\hlyellow{the energy balance constraint~\eqref{eq:balance} instead of $p_t$.} \hlyellow{ }

\noindent\hrulefill \\
\textbf{Model II-E3}: Generation limits, ramping, minimum up \& down times, start-up \& shut-down costs, capabilities, and trajectories  \\
\vspace{-5pt}
\noindent\hrule
\vspace{-5pt}
\begin{subequations}
\label{eq:3binucmodele3}
\begin{align}
    &\eqref{eq:pboundslow}-\eqref{eq:u},\eqref{eq:commitment},\eqref{eq:v},\eqref{eq:w},\eqref{eq:minimumuptime},\eqref{eq:minimumdowntime3bin},\eqref{eq:su3bintight}-\eqref{eq:sdpbound3bintightmultip} \notag \\
    &p^\mathrm{traj}_{t} = \sum_{i=1}^{\sut} \Psu_{i}v_{\sut+t+1-i} + \sum_{i}^{\sdt} \Psd_{gi}w_{t-i+1} \label{eq:ptraj}\\
    &p^\mathrm{T}_t = p_t + p^\mathrm{traj}_{t} \label{eq:susdtraj}
\end{align}
\end{subequations}
\vspace{-5pt}
\noindent\hrule 
\vspace{10pt} 

\hlyellow{Thus, \mbox{\hyperref[eq:3binucmodele3]{Model II-E3}} is obtained by adding variables $p^\mathrm{traj}_t$ and $p^\mathrm{T}_t$ and constraints~\eqref{eq:ptraj} and~\eqref{eq:susdtraj} to previous \mbox{\hyperref[eq:3binucmodele2]{Model II-E2}}. It follows directly from Lemma~\ref{thm:thelemma} that the LP-relaxation of \mbox{\hyperref[eq:3binucmodele3]{Model II-E3}} }\hlgreen{defines}\hlyellow{ the two-period convex hull of its MIP problem. This was also proven by Morales-Espa\~na et al.\cite{morales-espanaTightMIPFormulations2015a}.}

\subsubsection{Overview proven facets and convex hulls (3bin)}
\label{sec:overview3bin}

To conclude Section~\ref{sec:3bin}, we give an overview in Table~\ref{tab:convexhulls} of the relevant 3bin convex hulls that have been found. 
It follows from this table that UC \hyperref[eq:3binucmodel]{Model II} and \hyperref[eq:3binucmodele]{Model II-E} are the tightest possible formulations of their respective problems. \hyperref[eq:3binucmodelcompact]{Model II-S} contains fewer constraints and \hlgreen{defines} an unbounded convex hull, but it is equally tight as \hyperref[eq:3binucmodel]{Model II} in the direction of their objective. The LP relaxation of \hyperref[eq:3binucmodele2]{Model II-E2} and \hyperref[eq:3binucmodele3]{Model II-E3} are the tightest possible formulations for their respective MILP problems in two periods. 

\subsection{Extension to investment UC problems}
\label{sec:inv}

\hlgreen{Every model discussed in this paper can be }\hlteal{extended}\hlgreen{ to investment problems, by simply adding the investment variable $\uinv_g$ and constraints~\eqref{eq:inv} and~\eqref{eq:uinv}}\hlblue{, and changing the right-hand side of constraint~\eqref{eq:wcommitment} or~\eqref{eq:minimumdowntime3bin} (if included) to $\uinv - u_{gt}$.}
\hlgreen{In Lemma~\ref{obs:thminvestment}, we prove that any of the insights into tightness of the models in this paper still hold when adding the investment variable and these constraints}.

\begin{lemma}
\hlinv{Consider any of the proven convex hulls from Tables~\ref{tab:convexhulls1bin} and~\ref{tab:convexhulls}, and denote the polytope that they describe by $P$. Observe that they all contain exactly one of the following constraints: $u_t\leq1$,~\eqref{eq:wcommitment}, or~\eqref{eq:minimumdowntime3bin}. We refer to these constraints as the maximum commitment (MC) constraints. The right-hand side of all MC constraints is $1$:}
\begin{align}
    u_t &\leq 1 \qquad\forall t\in\mathcal{T} \nonumber\\
    w_t + u_t &\leq 1 \qquad\forall t\in\Tzero\tag{\ref{eq:wcommitment}} \\
    \sum_{i=t-\mdt+1}^t w_{i} + u_t &\leq 1 \qquad\forall t\in\Tmdt
    \tag{\ref{eq:minimumdowntime3bin}}
\end{align}
\hlinv{We now introduce the new variable $\uinv$, replace the right-hand side of the MC constraints by $\uinv$, and add the constraint $\uinv\leq 1$, such that we obtain the new polytope $P'$. It then follows that all variables of $P'$, except $p_t\ \forall t\in\mathcal{T}$, are integer in all extreme points of $P'$. }
\label{obs:thminvestment}
\end{lemma}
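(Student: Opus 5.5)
We would prove the lemma by a homogenization (cone) argument combined with the fact that $P$ is integral in all variables except $p_t$. Write $x$ for the vector of all variables of $P$ other than the $p_t$, so that every constraint of $P$ has the form $Ax + Bp \le b$. We first inspect the tables and note the following: apart from the MC constraints (and $u_t\le1$ or a redundant bound such as $w_t\le1$, $v_t\le1$), every constraint defining $P$ has right-hand side zero. This covers the generation limits, ramping and capability facets, \eqref{eq:commitment}, \eqref{eq:vcommitment}, \eqref{eq:minimumuptime}, and nonnegativity. So, after replacing the constant $1$ by $\uinv$, the new system is $A x + B p \le \uinv e_{MC}$ together with $\uinv\le1$. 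Here $e_{MC}$ is the indicator vector of the MC rows. The first step is this bookkeeping check, done table row by table row. Whenever a bound like $v_t\le1$ appears, we treat it as implied by an MC constraint combined with homogeneous ones, or we scale it as $v_t\le\uinv$.

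The second step is the key observation: $P'$ is the intersection of the cone $\{(x,p,\uinv): (x,p)\in \uinv P,\ \uinv\ge0\}$ with $\uinv\le1$. Indeed, for $\uinv>0$, dividing the homogeneous system by $\uinv$ gives exactly $(x,p)/\uinv\in P$. We also need $\uinv\ge0$. It is implied because $\uinv\ge u_t\ge0$ (or $\uinv\ge w_t+u_t\ge0$), where $u_t\ge0$ follows from \eqref{eq:pboundup} and \eqref{eq:pboundslow}. At $\uinv=0$ the homogeneous system forces $x=0$ and $p=0$: the MC constraints give $u_t\le0$, then \eqref{eq:pboundup} gives $p_t=0$, and \eqref{eq:commitment} with $v,w\ge0$ gives $v=w=0$. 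So the slice at $\uinv=0$ is the single point $0$, which means $\uinv P$ describes this slice correctly as well. Hence
\[
P'=\operatorname{conv}\bigl(\{0\}\cup(P\times\{1\})\bigr),
\]
the convex hull of the origin and a copy of $P$ lifted to $\uinv=1$. This holds because every point with $0<\uinv\le1$ equals $\uinv\cdot(z,1)+(1-\uinv)\cdot 0$ with $z\in P$.

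The third step finishes the argument. The extreme points of $\operatorname{conv}(\{0\}\cup(P\times\{1\}))$ are contained in $\{0\}\cup\{(z,1): z \text{ extreme in } P\}$. By the cited convex hull results (and Lemma~\ref{thm:thelemma} where $p$ is appended), every extreme point of $P$ has integer $x$. Therefore every extreme point of $P'$ has integer $x$ and integer $\uinv\in\{0,1\}$, while $p$ may be fractional, as the lemma claims.

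The main obstacle is the first step. We must verify uniformly that every listed convex hull description contains no inhomogeneous constraint other than the MC constraints. We must also confirm that $\uinv=0$ forces all variables to zero, using only homogeneous constraints and the MC constraints, in every model: 1bin models rely on \eqref{eq:pboundup} and \eqref{eq:pboundslow}, and 3bin models additionally on \eqref{eq:commitment} with $v,w\ge0$. If some description contained an extra constant such as $v_t\le1$, we would first show it is redundant in $P$, so it can be dropped before homogenizing.
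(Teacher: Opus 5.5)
Your proof is correct, but it argues differently from the paper. The paper follows the template of Lemma~\ref{thm:thelemma} in two parts. First, it treats vertices formed by the new constraints alone: the origin, and points with $\uinv=1$ where the original MC constraints are tight. Second, for vertices involving original constraints of $P$, it eliminates $\uinv$ by Fourier--Motzkin, notes that the projection of $P'$ is $P$, and concludes that $x$ is integer, with $\uinv$ equal to $1$ or to the left-hand side of an MC constraint. That argument never uses the fact that every non-MC constraint has zero right-hand side.

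You make this fact the hinge. It exhibits $P'$ as the cone over $P\times\{1\}$ cut off at $\uinv\le 1$. The extreme points can then be read off directly as $0$ and $(z,1)$ with $z$ extreme in $P$, which also gives $\uinv\in\{0,1\}$. What this buys is rigor. Knowing that the projection of $P'$ equals $P$ does not imply that vertices of $P'$ project to vertices of $P$, and without homogeneity the conclusion can fail. Take $P=\{u_1\le1,\ u_2\le1,\ u_1+u_2\ge1\}=\operatorname{conv}\{(0,1),(1,0),(1,1)\}$ with both bounds as MC constraints. Then $P'$ has the fractional vertex $(u_1,u_2,\uinv)=(\tfrac12,\tfrac12,\tfrac12)$, although eliminating $\uinv$ returns $P$. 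So your row-by-row homogeneity check, which does succeed for every description in both tables, is the substantive step rather than bookkeeping. The paper's route avoids that case inspection but leaves this point implicit.

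Two details in your second step need fixing. First, at $\uinv=0$, \eqref{eq:commitment} only yields $v_t=w_t$. To get $v=w=0$, use the MC constraint itself ($w_t+u_t\le0$, or its form \eqref{eq:minimumdowntime3bin}) or \eqref{eq:vcommitment}. Second, Table~\ref{tab:convexhulls} contains two unbounded descriptions: Model~II without \eqref{eq:vcommitment} and \eqref{eq:wcommitment}, and Model~II-S. For these the slice at $\uinv=0$ is $\operatorname{rec}(P)\times\{0\}$ rather than $\{0\}$, so the correct identity is $P'=\operatorname{conv}\bigl(\{0\}\cup(P\times\{1\})\bigr)+\bigl(\operatorname{rec}(P)\times\{0\}\bigr)$. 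Your conclusion is unaffected: a point $(r,0)$ with $0\ne r\in\operatorname{rec}(P)$ is the midpoint of $0$ and $(2r,0)$, so it is not extreme.
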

\begin{proof}
    \hlinv{We denote the variables of $P$ that we are interested in, so all but $p_t\ \forall t\in\mathcal{T}$, by $x$. We use a similar proof structure to Lemma~\ref{thm:thelemma}, to prove that $x$ and $\uinv$ are integer in all vertices of $P'$. First, we consider the intersections between the new constraints, so $\uinv \leq 1$ and all MC constraints with the replaced right-hand sides. All changed MC constraints intersect in the origin, so $x$ and $\uinv$ are integer in this vertex. Furthermore, all changed MC constraints intersect $\uinv\leq1$. In the vertices of these intersections, it holds that $\uinv=1$, so the left-hand side of the MC constraints equals 1 in these vertices. Thus, all vertices satisfy the original MC constraints with equality. These constraints are facets of $P$. Therefore, all variables $x$ of $P$ are integer in the new vertices, and $\uinv=1$ so it is also integer. 
    
    Second, we consider the intersection between the new constraints ($\uinv \leq1$ and the changed MC constraints) and the original constraints of $P$, and verify that $x$ and $\uinv$ are integer in these vertices. To this end, we obtain the projection of $P'$ onto the variables of $P$, by performing Fourier-Motzkin elimination on the new $\uinv$ variable. The lower bounds on the $\uinv$ variable are the changed MC constraints, and the upper bound is $\uinv\leq1$. Combining all upper and lower bounds on $\uinv$ to eliminate the variable gives us the original MC constraints, so the projected polytope is equal to $P$. Thus, by this projection argument, we conclude that $x$ are integer in the vertices of $P'$. Moreover, we observe that we have either $\uinv=1$ or $\uinv$ is equal to the left-hand side of the MC constraints in the vertices. In the first case, $\uinv$ is obviously integer. In the second case, $\uinv$ is also integer, since the left-hand side of the MC constraints is a summation of (some of) the $x$-variables of $P$. Since these variables are integer in the vertices, their sum is also integer, so $\uinv$ is integer.
    So $x$ and $\uinv$ are integer in all vertices of $P'$.}
\end{proof}



\section{Conclusion}
\hlblue{In this paper, we presented several 1bin and 3bin }\hlteal{unit commitment (UC)}\hlblue{ formulations, with different levels of detail as summarized in Figure~\ref{fig:levelsofdetail}. For each of the models, we presented theoretical insights into the tightness of the formulations, as summarized in Tables~\ref{tab:convexhulls1bin} and~\ref{tab:convexhulls}. In conclusion, \mbox{\hyperref[eq:1binucmodel]{Model I}}, \mbox{\hyperref[eq:3binucmodel]{Model II}}, and \mbox{\hyperref[eq:3binucmodele]{Model II-E}} are the tightest possible formulations of their respective problems for a single unit. Their LP relaxations describe the convex hull of the feasible space of the single-unit problems. Furthermore, the LP relaxations of \mbox{\hyperref[eq:1binucmodele]{Model I-E-T}}, \mbox{\hyperref[eq:1binucmodele2]{Model I-E2-T}}, and \mbox{\hyperref[eq:3binucmodele2]{Model II-E2}} describe the convex hull of their respective MILP problems for one unit and two periods. We showed that these results also hold when extending the formulations to 
investment problems.}

\hlblue{We also presented some alternatives for the models above, namely \mbox{\hyperref[eq:1binucmodelecompact]{Model I-E}}, \mbox{\hyperref[eq:1binucmodele2compact]{Model I-E2}}, and \mbox{\hyperref[eq:3binucmodelcompact]{Model II-S}}. Note that the first two are only valid for single-unit 
problems. These models are less tight, but contain fewer constraints than their respective tighter versions. We also explained how each 3bin model can be reformulated as a 2bin model. Which of the formulations performs best computationally for each of the respective UC problems remains an open question, that we investigate in Part II.}

\hlblue{The formulations with different levels of detail, offering a good balance between tightness and size, can improve the accuracy and computational performance of large-scale investment and operational models.}


\appendix{}

\subsection{Proofs 1bin convex hull ramping and start-up an shut-down capabilities}
\label{app:proofs1binramp}

\begin{proposition}
    \label{obs:susdconvexhull}
    Constraints~\eqref{eq:pboundup}, \eqref{eq:pboundslow}, \eqref{eq:rutight}, \eqref{eq:rdtight}, \eqref{eq:su1bintight}, \eqref{eq:sd1bintight}, \eqref{eq:supbound1bintight}, \eqref{eq:sdplowbound1bintight}, and $u_{t}\leq 1$ of \hyperref[eq:1binucmodele2]{Model I-E2-T} \hlgreen{are necessary and sufficient to} describe the convex hull of the the single-unit two-period ramping up and down and start-up and shut-down problem defined by \hyperref[eq:1binucmodele2compact]{Model I-E2}.
\end{proposition}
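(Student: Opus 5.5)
We derive the proposition from Theorem~\ref{obs:3binsusdcap} by projection, since the paper says the 1bin result "follows from the more extensive 3bin proof". For two periods $t-1,t$ and minimum up/down time 1, the proof of Theorem~\ref{obs:3binsusdcap} shows that the convex hull of the 3bin two-period problem with ramping and start-up/shut-down capabilities is described by
\eqref{eq:pboundslow}, \eqref{eq:commitment}, \eqref{eq:vcommitment}, \eqref{eq:wcommitment}, \eqref{eq:su3bintight}, \eqref{eq:sd3bintight}, \eqref{eq:supbound3bin}, \eqref{eq:sdplowbound3bin}, and $v_t,w_t\ge 0$. This convex hull is exactly the system \eqref{eq:fm6}, \eqref{eq:fmcommitment}, \eqref{eq:fmfinal1}--\eqref{eq:fmfinal4}, \eqref{eq:fmfinal5}--\eqref{eq:fmfinal9}.

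Every binary $u$ determines $v,w$ uniquely. Hence the integer feasible set of Model I-E2 (two periods) is the projection onto $(p_{t-1},p_t,u_{t-1},u_t)$ of the integer feasible set of the 3bin problem. We must check that \eqref{eq:su1bintight} and \eqref{eq:sd1bintight} agree with the 3bin constraints on all four commitment patterns. Projection commutes with taking convex hulls, so the 1bin convex hull is the projection of the 3bin hull.

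First, eliminate $w_t$ using \eqref{eq:fmcommitment}, setting $w_t=v_t-u_t+u_{t-1}$. Then eliminate $v_t$ by Fourier--Motzkin. Constraints \eqref{eq:fmfinal1}, \eqref{eq:fmfinal5}, and \eqref{eq:vcommitment2bin}--\eqref{eq:vlowerbound} give the bounds on $v_t$:
\[
\max\{0,\,u_t-u_{t-1}\}\le v_t\le \min\{u_t,\,1-u_{t-1}\}.
\]
In addition, \eqref{eq:fmfinal4}, \eqref{eq:fmfinal7}, \eqref{eq:fmfinal8}, and \eqref{eq:fmfinal9} give bounds whose direction depends on the signs of $\Pup-\Psu$, $\Pup-\Psd$, $\Psu-\Plow-\RU$, and $\Psd-\Plow-\RD$. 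We use the standing assumptions $\Plow\le\SU\le \Plow+\RU$, $\Plow\le\SD\le\Plow+\RD$, and $\SU,\SD\le\Pup$.

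Under these assumptions, \eqref{eq:fmfinal4} and \eqref{eq:fmfinal8} become lower bounds on $v_t$. Substituting $w_t$, \eqref{eq:fmfinal7} and \eqref{eq:fmfinal9} become upper bounds on $v_t$. Pairing each lower bound with each upper bound gives the projected system:

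\begin{itemize}
\item $\{0,u_t-u_{t-1}\}\times\{u_t,1-u_{t-1}\}$ gives $u_{t-1},u_t\le 1$ and trivial constraints.
\item Pairing \eqref{eq:fmfinal8} with $v_t\le u_t$ and with $v_t\le 1-u_{t-1}$ gives \eqref{eq:su1bintight} and a constraint that should be dominated.
\item Pairing \eqref{eq:fmfinal8} with $v_t\ge0$ gives \eqref{eq:rutight}.
\item Pairing \eqref{eq:fmfinal4} with $v_t\ge 0$ gives \eqref{eq:pboundup}, and with $v_t\ge u_t-u_{t-1}$ gives \eqref{eq:supbound1bintight}.
\item The shut-down side behaves symmetrically and gives \eqref{eq:rdtight}, \eqref{eq:sd1bintight}, \eqref{eq:pboundup}, and \eqref{eq:sdplowbound1bintight}.
\item Cross pairs such as \eqref{eq:fmfinal4} with \eqref{eq:fmfinal9}, or \eqref{eq:fmfinal8} with \eqref{eq:fmfinal7}, must be checked to be implied by the listed constraints together with \eqref{eq:pboundslow}.
\end{itemize}

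The main obstacle is this redundancy bookkeeping: showing that every cross combination from the Fourier--Motzkin step is a nonnegative combination of the nine listed constraint families. For each such pair we would first try to write the derived inequality as a sum of one listed ramping inequality and one bound inequality, using $\SU\le\Plow+\RU$ and $\SU\le\Pup$.

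Sufficiency then follows from the projection argument. For necessity, we exhibit for each listed inequality $2\cdot 2=4$ affinely independent integer feasible points that satisfy it with equality. The polytope is full-dimensional in $\mathbb{R}^4$, so this shows the inequality defines a facet and cannot be dropped. Degenerate parameter cases, such as $\SU=\Plow+\RU$, where two families coincide, are handled by noting that the duplicate inequality is then identical rather than redundant.
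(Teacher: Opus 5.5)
Your route matches the paper's: take the 3bin two-period hull from Theorem~\ref{obs:3binsusdcap}, substitute $w_t=v_t-u_t+u_{t-1}$, and eliminate $v_t$ by Fourier--Motzkin. However, the elimination step as you set it up has the bound directions reversed.

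Under $\SU\le\Pup$ and $\SU\le\Plow+\RU$, constraints \eqref{eq:fmfinal4} and \eqref{eq:fmfinal8} read $(\Pup-\SU)v_t\le\Pup u_t-p_t$ and $(\Plow+\RU-\SU)v_t\le(\Plow+\RU)u_t-\Plow u_{t-1}-p_t+p_{t-1}$. They are therefore \emph{upper} bounds on $v_t$, exactly like \eqref{eq:fmfinal7} and \eqref{eq:fmfinal9} after substituting $w_t$.

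As a result, your pairing list is not a Fourier--Motzkin elimination:
\begin{itemize}
\item Some entries are the right pairs but contradict your own classification, e.g.\ \eqref{eq:fmfinal8} or \eqref{eq:fmfinal4} with $v_t\ge0$.
\item The pair you name for \eqref{eq:su1bintight}, namely \eqref{eq:fmfinal8} with $v_t\le u_t$, amounts to setting $v_t=u_t$. It gives $p_t-p_{t-1}\le\SU u_t-\Plow u_{t-1}$, which is not \eqref{eq:su1bintight}. It is not even valid: at $u_{t-1}=u_t=1$ it cuts off the feasible point $p_{t-1}=\Plow$, $p_t=\Plow+\RU$ whenever $\SU<\Plow+\RU\le\Pup$.
\item The ``cross pairs'' you identify as the main obstacle never arise, since all four $p$-constraints bound $v_t$ from the same side.
\end{itemize}

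With the correct classification there are two lower bounds, $v_t\ge0$ and $v_t\ge u_t-u_{t-1}$. There are six upper bounds: $v_t\le u_t$, $v_t\le 1-u_{t-1}$, \eqref{eq:fmfinal4}, \eqref{eq:fmfinal7}, \eqref{eq:fmfinal8}, \eqref{eq:fmfinal9}. The twelve pairs map one-to-one onto the target system:
\begin{itemize}
\item $v_t\ge0$ gives $u_t\ge0$, $u_{t-1}\le1$, \eqref{eq:pboundup} at $t$, \eqref{eq:sdplowbound1bintight}, \eqref{eq:rutight}, and \eqref{eq:sd1bintight}.
\item $v_t\ge u_t-u_{t-1}$ gives $u_{t-1}\ge0$, $u_t\le1$, \eqref{eq:supbound1bintight}, \eqref{eq:pboundup} at $t-1$, \eqref{eq:su1bintight}, and \eqref{eq:rdtight}.
\item \eqref{eq:fmfinal3} and \eqref{eq:fmfinal6} pass through unchanged as \eqref{eq:pboundslow}, and the bounds $u\ge0$ are implied by \eqref{eq:pboundup} and \eqref{eq:pboundslow}.
\end{itemize}
So no redundancy bookkeeping is needed, and sufficiency is immediate once the signs are fixed.

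The rest of your plan is sound and more explicit than the paper, which only asserts that none of the constraints is redundant. This includes checking that the mixed-integer sets of the 1bin and 3bin problems correspond, using that projection commutes with convex hulls, and certifying facets by four affinely independent feasible points away from degenerate parameter values.
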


\begin{proof}
    This follows from the 3bin single-unit two-period ramping and start-up and shut-down capabilities convex hull proven in Theorem~\ref{obs:3binsusdcap}. By substituting $w_t=v_t - u_t + u_{t-1}$, we obtain the 2bin convex hull of the single-unit two-period problem. By performing Fourier-Motzkin elimination to remove variable $v_t$, we obtain the 1bin convex hull of the single-unit two-period problem, which consists of constraints~\eqref{eq:pboundup},~\eqref{eq:pboundslow},~\eqref{eq:rutight},~\eqref{eq:rdtight},~\eqref{eq:su1bintight},~\eqref{eq:sd1bintight},~\eqref{eq:supbound1bintight},~\eqref{eq:sdplowbound1bintight}, and $u_{t}\leq 1$. \hlgreen{None of these constraints are redundant, so they are all facet defining.} 
\end{proof} 

\begin{proposition}
    \label{obs:rampingconvexhull}
    Constraints~\eqref{eq:pboundup}, \eqref{eq:pboundslow}, \eqref{eq:rutight}, \eqref{eq:rdtight}, \eqref{eq:pbound1bintight}, \eqref{eq:plowbound1bintight}, and $u_{t}\leq 1$ of \hyperref[eq:1binucmodele]{Model I-E-T} \hlgreen{are necessary and sufficient to} describe the convex hull of the single-unit two-period ramping up and down problem defined by \hyperref[eq:1binucmodelecompact]{Model I-E}. 
\end{proposition}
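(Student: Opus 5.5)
The plan is to reuse Proposition~\ref{obs:susdconvexhull} by specializing the capabilities: set $\SU=\Plow+\RU$ and $\SD=\Plow+\RD$. With these values the MIP of \hyperref[eq:1binucmodele2compact]{Model I-E2} coincides with that of \hyperref[eq:1binucmodelecompact]{Model I-E}. Indeed, \eqref{eq:su1bintight} becomes $p_t-p_{t-1}\leq(\Plow+\RU)u_t-\Plow u_{t-1}$, which is exactly \eqref{eq:rutight}, and \eqref{eq:sd1bintight} becomes \eqref{eq:rdtight}. So the two feasible sets, and hence their convex hulls, are equal. Proposition~\ref{obs:susdconvexhull} is stated for general capability parameters with $\Plow\leq\SU\leq\Plow+\RU$ and $\Plow\leq\SD\leq\Plow+\RD$, and the chosen values are the extreme admissible ones. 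Its convex hull description therefore applies.

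Next we read off what the constraint list of Proposition~\ref{obs:susdconvexhull} becomes under this substitution. Constraint \eqref{eq:supbound1bintight} becomes $p_t\leq(\Plow+\RU)u_t+(\Pup-\Plow-\RU)u_{t-1}$, which is \eqref{eq:pbound1bintight}. Likewise \eqref{eq:sdplowbound1bintight} becomes \eqref{eq:plowbound1bintight}. The pairs \eqref{eq:su1bintight}/\eqref{eq:rutight} and \eqref{eq:sd1bintight}/\eqref{eq:rdtight} collapse into single constraints. The remaining constraints \eqref{eq:pboundup}, \eqref{eq:pboundslow} and $u_t\leq 1$ are unchanged. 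The resulting list is precisely the LP relaxation of \hyperref[eq:1binucmodele]{Model I-E-T}, so sufficiency follows.

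An alternative, in case the general-parameter validity is in doubt, is to redo the chain behind Proposition~\ref{obs:susdconvexhull} directly. We would take Theorem~\ref{obs:3binsusdcap} with the same substitution and substitute $w_t=v_t-u_t+u_{t-1}$. We would then Fourier--Motzkin eliminate $v_t$ from the two-period 3bin hull, using the lower bounds $v_t\geq 0$ and $v_t\geq u_t-u_{t-1}$ and the upper bounds $v_t\leq u_t$, $v_t\leq 1-u_{t-1}$, together with the capability bounds \eqref{eq:supbound3bin} and \eqref{eq:sdplowbound3bin}. The coefficient of $v_t$ in \eqref{eq:su3bintight} and \eqref{eq:sd3bintight} vanishes under the substitution, which simplifies the elimination considerably.

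For necessity, we must show that none of the listed constraints is redundant, i.e., each defines a facet. The hull is full-dimensional in $(u_{t-1},u_t,p_{t-1},p_t)$, so for each inequality we will exhibit four affinely independent integer-feasible points satisfying it with equality. These points come from the four commitment patterns $(0,0),(0,1),(1,0),(1,1)$ with suitable outputs. For example, for \eqref{eq:pbound1bintight} we use $(0,0,0,0)$, $(0,1,0,\Plow+\RU)$, $(1,1,\Pup-\RU,\Pup)$ and $(1,1,\Pup,\Pup)$. For the ramping facets we use the start-up point at its maximum together with on--on points at full ramp.

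The main obstacle is the degenerate parameter regimes, such as $\Plow+\RU\geq\Pup$. There \eqref{eq:pbound1bintight} is implied by \eqref{eq:pboundup} and stops being a facet, so necessity holds only under the natural assumption $\Plow+\RU<\Pup$ and $\Plow+\RD<\Pup$; we would state this assumption explicitly.
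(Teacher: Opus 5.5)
Your main argument is the paper's own proof. You substitute $\SU=\Plow+\RU$ and $\SD=\Plow+\RD$ into Proposition~\ref{obs:susdconvexhull}, observe that \eqref{eq:su1bintight}, \eqref{eq:sd1bintight}, \eqref{eq:supbound1bintight}, \eqref{eq:sdplowbound1bintight} become \eqref{eq:rutight}, \eqref{eq:rdtight}, \eqref{eq:pbound1bintight}, \eqref{eq:plowbound1bintight}, and read off the list. The paper only asserts that none of the constraints is redundant. Your explicit affinely independent tight points, which do work for \eqref{eq:pbound1bintight} when $\RU>0$, make the necessity part more concrete than the original.

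One claim in your last paragraph is wrong, and it affects sufficiency, not only necessity. If $\Plow+\RU>\Pup$, then \eqref{eq:pbound1bintight} is not implied by \eqref{eq:pboundup}; it is invalid. The shut-down point $(u_{t-1},u_t,p_{t-1},p_t)=(1,0,\Plow,0)$ satisfies every constraint of Model I-E. Yet \eqref{eq:pbound1bintight} at that point reads $0\leq\Pup-\Plow-\RU<0$, so the listed system cuts off a feasible point and the proposition fails in this regime.

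The same point is feasible for Model I-E2 and violates \eqref{eq:supbound1bintight} whenever $\SU>\Pup$. So Proposition~\ref{obs:susdconvexhull} tacitly needs $\SU,\SD\leq\Pup$ in addition to $\SU\leq\Plow+\RU$ and $\SD\leq\Plow+\RD$. Your substituted values are therefore \emph{admissible} only if $\Plow+\RU\leq\Pup$ and $\Plow+\RD\leq\Pup$, and this hypothesis must be imposed on the whole statement. It costs nothing: replacing $\RU$ by $\min\{\RU,\Pup-\Plow\}$ and $\RD$ by $\min\{\RD,\Pup-\Plow\}$ leaves the integer feasible set of Model I-E unchanged.

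For necessity you then need the strict inequalities together with $\RU,\RD>0$. At $\Plow+\RU=\Pup$, \eqref{eq:pbound1bintight} coincides with $p_t\leq\Pup u_t$, and \eqref{eq:rutight} is the sum of $p_t\leq\Pup u_t$ and $p_{t-1}\geq\Plow u_{t-1}$. At $\RU=0$, \eqref{eq:pbound1bintight} is the sum of \eqref{eq:rutight} and $p_{t-1}\leq\Pup u_{t-1}$.
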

\begin{proof}
    This follows from the 1bin single-unit two-period ramping and start-up and shut-down capabilities convex hull proven in Proposition~\ref{obs:susdconvexhull}. By substituting $\SU=\Plow+\RU$ and $\SD = \Plow+\RD$, we obtain the 1bin convex hull of the single-unit two-period ramping up and down problem, which consists of constraints~\eqref{eq:pboundup},\eqref{eq:pboundslow},\eqref{eq:rutight},\eqref{eq:rdtight},\eqref{eq:pbound1bintight},\eqref{eq:plowbound1bintight} and $u_{t}\leq 1$. \hlgreen{None of these constraints are redundant, so they are all facet defining.} 
\end{proof}

{\scriptsize{}\bibliographystyle{IEEEtran}
\bibliography{Medium_level_detail}

@article{arroyoOptimalResponseThermal2000,
  title = {Optimal Response of a Thermal Unit to an Electricity Spot Market},
  author = {Arroyo, J.M. and Conejo, A.J.},
  year = 2000,
  month = aug,
  journal = {IEEE Transactions on Power Systems},
  volume = {15},
  number = {3},
  pages = {1098--1104},
  issn = {1558-0679},
  doi = {10.1109/59.871739},
  urldate = {2024-11-18}
}

@article{balasDisjunctiveProgrammingHierarchy1985,
  title = {Disjunctive {{Programming}} and a {{Hierarchy}} of {{Relaxations}} for {{Discrete Optimization Problems}}},
  author = {Balas, Egon},
  year = 1985,
  month = aug,
  journal = {SIAM. J. on Algebraic and Discrete Methods},
  volume = {6},
  number = {3},
  pages = {466--486},
  doi = {10.1137/0606047},
  urldate = {2023-06-27},
  langid = {english}
}

@article{carrionComputationallyEfficientMixedinteger2006,
  title = {A Computationally Efficient Mixed-Integer Linear Formulation for the Thermal Unit Commitment Problem},
  author = {Carrion, M. and Arroyo, J.M.},
  year = 2006,
  month = aug,
  journal = {IEEE Transactions on Power Systems},
  volume = {21},
  number = {3},
  pages = {1371--1378},
  issn = {1558-0679},
  doi = {10.1109/TPWRS.2006.876672},
  urldate = {2023-12-18}
}

@article{damci-kurtPolyhedralStudyProduction2016,
  title = {A Polyhedral Study of Production Ramping},
  author = {{Damc{\i}-Kurt}, Pelin and K{\"u}{\c c}{\"u}kyavuz, Simge and Rajan, Deepak and Atamt{\"u}rk, Alper},
  year = 2016,
  month = jul,
  journal = {Mathematical Programming},
  volume = {158},
  number = {1},
  pages = {175--205},
  issn = {1436-4646},
  doi = {10.1007/s10107-015-0919-9},
  urldate = {2023-10-17},
  langid = {english}
}

@article{elgersmaTightMIPFormulations2026a,
	author = {Elgersma, Maaike B. and Morales-Espa{\~n}a, Germ{\'a}n and Aardal, Karen I. and Helist{\"o}, Niina and Kiviluoma, Juha and de Weerdt, Mathijs M.},
	doi = {10.1109/TPWRS.2026.3669407},
	issn = {1558-0679},
	journal = {IEEE Transactions on Power Systems},
	month = jul,
	number = {4},
	pages = {2428--2440},
	title = {Tight {MIP} {Formulations} for {Optimal} {Operation} and {Investment} of {Storage} {Including} {Reserves}},
	url = {https://ieeexplore-ieee-org.tudelft.idm.oclc.org/document/11419843},
	urldate = {2026-07-02},
	volume = {41},
	year = {2026}}

@article{garverPowerGenerationScheduling1962,
  title = {Power {{Generation Scheduling}} by {{Integer Programming-Development}} of {{Theory}}},
  author = {Garver, L. L.},
  year = 1962,
  month = apr,
  journal = {Transactions of the American Institute of Electrical Engineers. Part III: Power Apparatus and Systems},
  volume = {81},
  number = {3},
  pages = {730--734},
  issn = {2379-6766},
  doi = {10.1109/AIEEPAS.1962.4501405},
  urldate = {2024-10-04}
}

@article{gentileTightMIPFormulation2017a,
  title = {A Tight {{MIP}} Formulation of the Unit Commitment Problem with Start-up and Shut-down Constraints},
  author = {Gentile, C. and {Morales-Espa{\~n}a}, G. and Ramos, A.},
  year = 2017,
  month = mar,
  journal = {EURO Journal on Computational Optimization},
  volume = {5},
  number = {1},
  pages = {177--201},
  issn = {2192-4414},
  doi = {10.1007/s13675-016-0066-y},
  urldate = {2023-11-23},
  langid = {english}
}

@article{gongJointUnitCommitment2024,
  title = {Joint Unit Commitment Model for Hydro and Hydrogen Power to Adapt to Large-Scale Photovoltaic Power},
  author = {Gong, Yu and Liu, Tingxi and Liu, Pan and Tong, Xin},
  year = 2024,
  month = oct,
  journal = {Energy Conversion and Management},
  volume = {317},
  pages = {118794},
  issn = {0196-8904},
  doi = {10.1016/j.enconman.2024.118794},
  urldate = {2025-07-09}
}

@article{helistoImpactOperationalDetails2021,
  title = {Impact of Operational Details and Temporal Representations on Investment Planning in Energy Systems Dominated by Wind and Solar},
  author = {Helist{\"o}, N. and Kiviluoma, J. and {Morales-Espa{\~n}a}, G. and O'Dwyer, C.},
  year = 2021,
  journal = {Applied Energy},
  volume = {290},
  issn = {0306-2619},
  doi = {10.1016/j.apenergy.2021.116712},
  langid = {english}
}

@article{huaRepresentingOperationalFlexibility2018a,
  title = {Representing {{Operational Flexibility}} in {{Generation Expansion Planning Through Convex Relaxation}} of {{Unit Commitment}}},
  author = {Hua, Bowen and Baldick, Ross and Wang, Jianhui},
  year = 2018,
  month = mar,
  journal = {IEEE Transactions on Power Systems},
  volume = {33},
  number = {2},
  pages = {2272--2281},
  issn = {1558-0679},
  doi = {10.1109/TPWRS.2017.2735026},
  urldate = {2025-06-30}
}

@inproceedings{jianComparisonBlockBidding2002,
  title = {Comparison of Block Bidding and Hourly Bidding Based on Case Study},
  booktitle = {Proceedings. {{International Conference}} on {{Power System Technology}}},
  author = {Jian, Geng and Xifan, Wang and Xingzhong, Bai and Haoyong, Chen},
  year = 2002,
  month = oct,
  volume = {3},
  pages = {1387-1391 vol.3},
  doi = {10.1109/ICPST.2002.1067757},
  urldate = {2025-07-09}
}

@article{knuevenExploitingIdenticalGenerators2018,
  title = {Exploiting {{Identical Generators}} in {{Unit Commitment}}},
  author = {Knueven, Ben and Ostrowski, Jim and Watson, Jean-Paul},
  year = 2018,
  month = jul,
  journal = {IEEE Transactions on Power Systems},
  volume = {33},
  number = {4},
  pages = {4496--4507},
  issn = {0885-8950, 1558-0679},
  doi = {10.1109/TPWRS.2017.2783850},
  urldate = {2025-08-04},
  copyright = {https://ieeexplore.ieee.org/Xplorehelp/downloads/license-information/IEEE.html},
  langid = {english}
}

@article{knuevenMixedIntegerProgramming,
  title = {On {{Mixed Integer Programming Formulations}} for the {{Unit Commitment Problem}}},
  author = {Knueven, Bernard and Ostrowski, James and Watson, Jean-Paul},
  langid = {english}
}

@article{knuevenNovelMatchingFormulation2020,
  title = {A Novel Matching Formulation for Startup Costs in Unit Commitment},
  author = {Knueven, Bernard and Ostrowski, James and Watson, Jean-Paul},
  year = 2020,
  month = jun,
  journal = {Mathematical Programming Computation},
  volume = {12},
  number = {2},
  pages = {225--248},
  issn = {1867-2949, 1867-2957},
  doi = {10.1007/s12532-020-00176-5},
  urldate = {2025-08-04},
  langid = {english}
}

@article{leeMinminpolytopes2004,
  title = {Min-up/Min-down Polytopes},
  author = {Lee, Jon and Leung, Janny and Margot, Fran{\c c}ois},
  year = 2004,
  month = jun,
  journal = {Discrete Optimization},
  volume = {1},
  number = {1},
  pages = {77--85},
  issn = {1572-5286},
  doi = {10.1016/j.disopt.2003.12.001},
  urldate = {2024-10-04}
}

@article{meusApplicabilityClusteredUnit2018,
  title = {Applicability of a {{Clustered Unit Commitment Model}} in {{Power System Modeling}}},
  author = {Meus, Jelle and Poncelet, Kris and Delarue, Erik},
  year = 2018,
  month = mar,
  journal = {IEEE Transactions on Power Systems},
  volume = {33},
  number = {2},
  pages = {2195--2204},
  issn = {0885-8950, 1558-0679},
  doi = {10.1109/TPWRS.2017.2736441},
  urldate = {2023-05-16},
  langid = {english}
}

@article{monteroReviewUnitCommitment2022,
  title = {A {{Review}} on the {{Unit Commitment Problem}}: {{Approaches}}, {{Techniques}}, and {{Resolution Methods}}},
  shorttitle = {A {{Review}} on the {{Unit Commitment Problem}}},
  author = {Montero, Luis and Bello, Antonio and Reneses, Javier},
  year = 2022,
  month = jan,
  journal = {Energies},
  volume = {15},
  number = {4},
  pages = {1296},
  publisher = {Multidisciplinary Digital Publishing Institute},
  issn = {1996-1073},
  doi = {10.3390/en15041296},
  urldate = {2024-01-26},
  copyright = {http://creativecommons.org/licenses/by/3.0/},
  langid = {english}
}

@article{morales-espanaHiddenPowerSystem2017,
  title = {Hidden Power System Inflexibilities Imposed by Traditional Unit Commitment Formulations},
  author = {{Morales-Espa{\~n}a}, Germ{\'a}n and {Ram{\'i}rez-Elizondo}, Laura and Hobbs, Benjamin F.},
  year = 2017,
  month = apr,
  journal = {Applied Energy},
  volume = {191},
  pages = {223--238},
  issn = {0306-2619},
  doi = {10.1016/j.apenergy.2017.01.089},
  urldate = {2024-06-12}
}

@article{morales-espanaModelingHiddenFlexibility2019,
  title = {Modeling the {{Hidden Flexibility}} of {{Clustered Unit Commitment}}},
  author = {{Morales-Espana}, German and {Tejada-Arango}, Diego A.},
  year = 2019,
  month = jul,
  journal = {IEEE Transactions on Power Systems},
  volume = {34},
  number = {4},
  pages = {3294--3296},
  issn = {0885-8950, 1558-0679},
  doi = {10.1109/TPWRS.2019.2908051},
  urldate = {2023-05-16},
  langid = {english}
}

@article{morales-espanaTightCompactMILP2013,
  title = {Tight and {{Compact MILP Formulation}} for the {{Thermal Unit Commitment Problem}}},
  author = {{Morales-Espana} and Latorre, Jesus M. and Ramos, Andres},
  year = 2013,
  month = nov,
  journal = {IEEE Transactions on Power Systems},
  volume = {28},
  number = {4},
  pages = {4897--4908},
  issn = {0885-8950, 1558-0679},
  doi = {10.1109/TPWRS.2013.2251373},
  urldate = {2023-05-16},
  langid = {english}
}

@article{morales-espanaTightCompactMILP2013a,
  title = {Tight and {{Compact MILP Formulation}} of {{Start-Up}} and {{Shut-Down Ramping}} in {{Unit Commitment}}},
  author = {{Morales-Espana}, German and Latorre, Jesus M. and Ramos, Andres},
  year = 2013,
  month = may,
  journal = {IEEE Transactions on Power Systems},
  volume = {28},
  number = {2},
  pages = {1288--1296},
  issn = {0885-8950, 1558-0679},
  doi = {10.1109/TPWRS.2012.2222938},
  urldate = {2023-05-16},
  langid = {english}
}

@article{morales-espanaTightMIPFormulations2015a,
  title = {Tight {{MIP}} Formulations of the Power-Based Unit Commitment Problem},
  author = {{Morales-Espa{\~n}a}, Germ{\'a}n and Gentile, Claudio and Ramos, Andres},
  year = 2015,
  month = oct,
  journal = {OR Spectrum},
  volume = {37},
  number = {4},
  pages = {929--950},
  issn = {1436-6304},
  doi = {10.1007/s00291-015-0400-4},
  urldate = {2023-11-23},
  langid = {english}
}

@article{muckstadtApplicationLagrangianRelaxation1977,
  title = {An {{Application}} of {{Lagrangian Relaxation}} to {{Scheduling}} in {{Power-Generation Systems}}},
  author = {Muckstadt, John A. and Koenig, Sherri A.},
  year = 1977,
  journal = {Operations Research},
  volume = {25},
  number = {3},
  pages = {387--403},
  publisher = {INFORMS},
  urldate = {2025-08-06},
  langid = {english}
}

@article{nowakStochasticLagrangianRelaxation2000,
  title = {Stochastic {{Lagrangian Relaxation Applied}} to {{Power Scheduling}} in a {{Hydro-Thermal System}} under {{Uncertainty}}},
  author = {Nowak, Matthias and Roemisch, Werner},
  year = 2000,
  month = nov,
  journal = {Annals of Operations Research},
  volume = {100},
  pages = {251--272},
  doi = {10.1023/a:1019248506301}
}

@article{ostrowskiTightMixedInteger2012,
  title = {Tight {{Mixed Integer Linear Programming Formulations}} for the {{Unit Commitment Problem}}},
  author = {Ostrowski, James and Anjos, Miguel F. and Vannelli, Anthony},
  year = 2012,
  month = feb,
  journal = {IEEE Transactions on Power Systems},
  volume = {27},
  number = {1},
  pages = {39--46},
  issn = {1558-0679},
  doi = {10.1109/TPWRS.2011.2162008},
  urldate = {2024-01-26}
}

@article{palmintierHeterogeneousUnitClustering2014,
  title = {Heterogeneous {{Unit Clustering}} for {{Efficient Operational Flexibility Modeling}}},
  author = {Palmintier, Bryan S. and Webster, Mort D.},
  year = 2014,
  month = may,
  journal = {IEEE Transactions on Power Systems},
  volume = {29},
  number = {3},
  pages = {1089--1098},
  issn = {0885-8950, 1558-0679},
  doi = {10.1109/TPWRS.2013.2293127},
  urldate = {2023-05-16},
  langid = {english}
}

@article{palmintierImpactOperationalFlexibility2015,
  title = {Impact of {{Operational Flexibility}} on {{Electricity Generation Planning With Renewable}} and {{Carbon Targets}}},
  author = {Palmintier, Bryan and Webster, Mort},
  year = 2015,
  month = dec,
  journal = {IEEE Transactions on Sustainable Energy},
  volume = {7},
  pages = {1--13},
  doi = {10.1109/TSTE.2015.2498640}
}

@book{palmintierImpactUnitCommitment2011,
  title = {Impact of Unit Commitment Constraints on Generation Expansion Planning with Renewables},
  author = {Palmintier, Bryan and Webster, Mort},
  year = 2011,
  month = aug,
  journal = {Power and Energy Society General Meeting, 2011 IEEE},
  pages = {7},
  doi = {10.1109/PES.2011.6038963}
}

@misc{panConvexHullsUnit2017,
  title = {Convex {{Hulls}} for the {{Unit Commitment Polytope}}},
  author = {Pan, Kai and Guan, Yongpei},
  year = 2017,
  month = jan,
  number = {arXiv:1701.08943},
  eprint = {1701.08943},
  primaryclass = {math},
  publisher = {arXiv},
  doi = {10.48550/arXiv.1701.08943},
  urldate = {2025-07-31},
  archiveprefix = {arXiv}
}

@misc{panPolyhedralStudyIntegrated2016,
  title = {A {{Polyhedral Study}} of the {{Integrated Minimum-Up}}/-{{Down Time}} and {{Ramping Polytope}}},
  author = {Pan, Kai and Guan, Yongpei},
  year = 2016,
  month = apr,
  number = {arXiv:1604.02184},
  eprint = {1604.02184},
  publisher = {arXiv},
  doi = {10.48550/arXiv.1604.02184},
  urldate = {2024-11-11},
  archiveprefix = {arXiv}
}

@article{ponceletImpactLevelTemporal2016,
  title = {Impact of the Level of Temporal and Operational Detail in Energy-System Planning Models},
  author = {Poncelet, Kris and Delarue, Erik and Six, Daan and Duerinck, Jan and D'haeseleer, William},
  year = 2016,
  month = jan,
  journal = {Applied Energy},
  volume = {162},
  pages = {631--643},
  issn = {0306-2619},
  doi = {10.1016/j.apenergy.2015.10.100},
  urldate = {2024-01-25}
}

@article{ponceletSelectingRepresentativeDays2017,
  title = {Selecting {{Representative Days}} for {{Capturing}} the {{Implications}} of {{Integrating Intermittent Renewables}} in {{Generation Expansion Planning Problems}}},
  author = {Poncelet, Kris and H{\"o}schle, Hanspeter and Delarue, Erik and Virag, Ana and D'haeseleer, William},
  year = 2017,
  month = may,
  journal = {IEEE Transactions on Power Systems},
  volume = {32},
  number = {3},
  pages = {1936--1948},
  issn = {1558-0679},
  doi = {10.1109/TPWRS.2016.2596803},
  urldate = {2024-01-11}
}

@article{queyranneTightMIPFormulations2017,
  title = {Tight {{MIP}} Formulations for Bounded up/down Times and Interval-Dependent Start-Ups},
  author = {Queyranne, Maurice and Wolsey, Laurence A.},
  year = 2017,
  month = jul,
  journal = {Mathematical Programming},
  volume = {164},
  number = {1},
  pages = {129--155},
  issn = {1436-4646},
  doi = {10.1007/s10107-016-1079-2},
  urldate = {2024-09-04},
  langid = {english}
}

@techreport{rajanMinimumPolytopesUnit2005,
  title = {Minimum {{Up}}/{{Down Polytopes}} of the {{Unit Commitment Problem}} with {{Start-Up Costs}}},
  author = {Rajan, Deepak and Takriti, Samer},
  year = 2005,
  institution = {IBM},
  urldate = {2024-09-04},
  copyright = {\copyright{} Copyright IBM Corp. 2016},
  langid = {english}
}

@article{ridhaComplexityProfilesLargeScale2020,
  title = {Complexity {{Profiles}}: {{A Large-Scale Review}} of {{Energy System Models}} in {{Terms}} of {{Complexity}}},
  shorttitle = {Complexity {{Profiles}}},
  author = {Ridha, Elias and Nolting, Lars and Praktiknjo, Aaron},
  year = 2020,
  month = jul,
  journal = {Energy Strategy Reviews},
  volume = {30},
  pages = {100515},
  doi = {10.1016/j.esr.2020.100515}
}

@article{tejada-arangoWhichUnitCommitmentFormulation2020,
  title = {Which {{Unit-Commitment Formulation}} Is {{Best}}? {{A Comparison Framework}}},
  shorttitle = {Which {{Unit-Commitment Formulation}} Is {{Best}}?},
  author = {{Tejada-Arango}, Diego A. and Lumbreras, Sara and {S{\'a}nchez-Mart{\'i}n}, Pedro and Ramos, Andres},
  year = 2020,
  month = jul,
  journal = {IEEE Transactions on Power Systems},
  volume = {35},
  number = {4},
  pages = {2926--2936},
  issn = {1558-0679},
  doi = {10.1109/TPWRS.2019.2962024},
  urldate = {2024-09-10}
}

@article{vanackooijLargescaleUnitCommitment2018,
  title = {Large-Scale Unit Commitment under Uncertainty: An Updated Literature Survey},
  shorttitle = {Large-Scale Unit Commitment under Uncertainty},
  author = {Van Ackooij, W. and Danti Lopez, I. and Frangioni, A. and Lacalandra, F. and Tahanan, M.},
  year = 2018,
  month = dec,
  journal = {Annals of Operations Research},
  volume = {271},
  number = {1},
  pages = {11--85},
  issn = {0254-5330, 1572-9338},
  doi = {10.1007/s10479-018-3003-z},
  urldate = {2025-01-04},
  langid = {english}
}

@article{wangLongcycleStoragesAligned2023,
  title = {Long-Cycle Storages Aligned with Two-Dimensional Clustered Unit Commitment},
  author = {Wang, Jingbo and Shang, Ce},
  year = 2023,
  month = aug,
  journal = {International Journal of Electrical Power \& Energy Systems},
  volume = {150},
  pages = {109090},
  issn = {01420615},
  doi = {10.1016/j.ijepes.2023.109090},
  urldate = {2024-08-20},
  langid = {english}
}

@inproceedings{wogrinWhatTimeperiodAggregation2019,
  title = {What Time-Period Aggregation Method Works Best for Power System Operation Models with Renewables and Storage?},
  booktitle = {2019 {{International Conference}} on {{Smart Energy Systems}} and {{Technologies}} ({{SEST}})},
  author = {Wogrin, S. and {Tejada-Arango}, D.A. and Pineda, S. and Morales, J.M.},
  year = 2019,
  month = sep,
  pages = {1--6},
  doi = {10.1109/SEST.2019.8849027}
}

@article{wuijtsEffectModellingChoices2023,
  title = {Effect of Modelling Choices in the Unit Commitment Problem},
  author = {Wuijts, R.H. and {van den Akker}, M. and {van den Broek}, M.},
  year = 2023,
  journal = {Energy Systems},
  issn = {1868-3967},
  doi = {10.1007/s12667-023-00564-5},
  langid = {english}
}

@article{yangNovelProjectedTwo2016,
  title = {A {{Novel Projected Two Binary Variables Formulation}} for {{Unit Commitment Problem}}},
  author = {Yang, Linfeng and Zhang, Chen and Jian, Jinbao and Meng, Ke and Dong, Z.Y.},
  year = 2016,
  month = jun,
  journal = {Applied Energy},
  volume = {187},
  doi = {10.1016/j.apenergy.2016.11.096}
}

@article{zhangTightUnitAggregation2025,
  title = {A {{Tight Unit Aggregation}} for {{Unit Commitment}} to {{Eliminate Symmetry}}},
  author = {Zhang, Biyuan and Ding, Tao and Xiao, Yang},
  year = 2025,
  journal = {IEEE Transactions on Power Systems},
  pages = {1--11},
  issn = {1558-0679},
  doi = {10.1109/TPWRS.2025.3578915},
  urldate = {2025-06-30}
}

@misc{tianPolyhedralStudyUnit2026,
    title = {A {Polyhedral} {Study} on {Unit} {Commitment} with a {Single} {Type} of {Binary} {Variables}},
    url = {http://arxiv.org/abs/2602.22058},
    doi = {10.48550/arXiv.2602.22058},
    urldate = {2026-02-26},
    publisher = {arXiv},
    author = {Tian, Bin and Pan, Kai and Li, Chung-Lun},
    month = feb,
    year = {2026},
    note = {arXiv:2602.22058 [math]},
}
}{\scriptsize\par}

\end{document}